\numberwithin{equation}{section}
\def\H{{\cal H}}
\def\R{\mathbb{R}}
\def\Z{\mathbb{Z}}
\def\T{\mathbb{T}}
\def\C{\mathbb{C}}
\def\H1{H^1(\R)}
\renewcommand{\theequation}{\arabic{section}.\arabic{equation}}
\newtheorem{thm}{Theorem}
\newtheorem{lem}{Lemma}
\newtheorem{prop}{Proposition}
\newtheorem{defn}{Definition}
\newtheorem{remark}{Remark}
\newcommand{\Extend}[5]{\ext@arrow0099{\arrowfill@#1#2#3}{#4}{#5}}
\begin{document}

\setcounter{page}{1}

\title[Solitary wave solutions for gDNLS]{Instability of the solitary wave solutions for the generalized derivative nonlinear Schr\"odinger equation in the endpoint case}

\author{Bing Li}
\address{Center for Applied Mathematics\\
Tianjin University\\
Tianjin 300072, China}
\email{binglimath@gmail.com}
\thanks{}

\author{Cui Ning}
\address{School of Mathematics, South China University of Technology, Guangzhou, Guangdong 510640, P.R.China}
\email{cuiningmath@gmail.com}
\thanks{}

%\subjclass[2010]{Primary  35Q55; Secondary 35A01}

%\date{\today}

\keywords{generalized DNLS, orbital instability, solitary wave solutions, endpoint case}

\maketitle

\begin{abstract}\noindent
We consider the stability theory of solitary wave solutions for the generalized derivative nonlinear Schr\"odinger equation
  $$
  i\partial_{t}u+\partial_{x}^{2}u+i|u|^{2\sigma}\partial_x u=0,
  $$
  where $1<\sigma<2$.
  The equation has a two-parameter family of solitary wave solutions of the form
  $$
u_{\omega,c}(t,x)=e^{i\omega t+i\frac c2(x-ct)-\frac{i}{2\sigma+2}\int_{-\infty}^{x-ct}\varphi^{2\sigma}_{\omega,c}(y)dy}\varphi_{\omega,c}(x-ct).
  $$
  The stability theory in the frequency region of $|c|<2\sqrt{\omega}$ was studied previously. In this paper, we prove the instability of the solitary wave solutions in the endpoint case $c=2\sqrt{\omega}$.
\end{abstract}

\section{Introduction}

\subsection{Setting of the Problem}

In this paper, we consider the stability theory of solitary wave solutions for the generalized derivative nonlinear Schr\"odinger (gDNLS) equation:
\begin{equation}\label{eqs:gDNLS}
   \left\{ \aligned
    &i\partial_{t}u+\partial_{x}^{2}u+i|u|^{2\sigma}\partial_x u=0,\qquad (t,x)\in [0,T)\times \R,\\
    &u(0,x)=u_0(x),\qquad \qquad \qquad\ \ \ x\in \R,
   \endaligned
  \right.
\end{equation}
where $\sigma>0$.

When $\sigma=1$, by a suitable gauge transformation, \eqref{eqs:gDNLS} is transformed to the standard derivative nonlinear Schr\"odinger (DNLS) equation:
\begin{equation}\label{DNLS0}
   \left\{ \aligned
    &i\partial_{t}u+\partial_{x}^{2}u+i\partial_x (|u|^{2}u)=0,\qquad (t,x)\in [0,T)\times \R,\\
    &u(0,x)=u_0(x),\qquad \qquad \qquad\ \ x\in \R.
   \endaligned
  \right.
\end{equation}
It describes an Alfv\'{e}n wave and appears in plasma physics, nonlinear optics, and so on (see \cite{MOMT-PHY, M-PHY}). The Cauchy problem of  \eqref{DNLS0} is local well-posedness in the energy space $\H1$ by Hayashi and Ozawa \cite{HaOz-92-DNLS, HaOz-94-DNLS}. That is,  given $u_0\in \H1$, there exists a unique maximal solution $u(t,x)$ of \eqref{DNLS0} in $C([0,T), \H1)$, moreover, $\lim_{t\rightarrow T}\|u\|_{L^2}=\infty$ if $T<+\infty$. See also \cite{TuFu-80-DNLS, TuFu-81-DNLS,GuTa91, Oz-96-DNLS, Ta-99-DNLS-LWP,BiLi-01-Illposed-DNLS-BO,Ta-01-DNLS-GWP} for some of the previous or extended results.
Meanwhile, the global well-posedness was widely studied. In \cite{HaOz-92-DNLS}, the authors showed that $\H1$ initial data with $\|u_0\|_{L^2}<\sqrt{2\pi}$ gives global and $\H1$ bounded solutions. Recently, the global well-posedness with $\H1$ initial data satisfying $\|u_0\|_{L^2}<2\sqrt{\pi}$, has been established in the works by Wu \cite{Wu1, Wu2}. In \cite{Guo-Wu-15-DNLS}, a two-page's proof was presented by simplifying the argument in \cite{Wu2}.  More recently, Jenkins, Liu, Perry and Sulem \cite{JeLiPeSu} proved that the Cauchy problem \eqref{DNLS0} is global well-posedness for any $H^{2,2}$-initial datum, without mass restriction.
See also \cite{HaOz-94-DNLS, Oz-96-DNLS, CKSTT-01-DNLS, CKSTT-02-DNLS,Miao-Wu-Xu:2011:DNLS,GHLN,Wu1,Guo-Wu-15-DNLS} for the related results.

It is known (see for examples \cite{GuWu95,CoOh-06-DNLS,Wu2}) that \eqref{DNLS0} has a two-parameter family of solitary wave solutions:
\begin{align*}
\widetilde{u}_{\omega,c}(t,x)=e^{i\omega t+i\frac{c}{2}(x-ct)
-\frac{3}{4}i\int_{-\infty}^{x-ct}|\widetilde{\varphi}_{\omega,c}(\eta)|^2d\eta}\widetilde{\varphi}_{\omega,c}(x-ct),
\end{align*}
where $\omega=(\omega,c)\in\{(\omega,c)\in\mathbb{R}^+\times\mathbb{R}: c^2< 4\omega\ \mbox{or}\ c=2\sqrt{\omega}\}$, and $\widetilde{\varphi}_{\omega,c}$ is the solution of
$$-\partial_x^2\widetilde{\varphi}+(\omega-\frac{c^2}{4})\widetilde{\varphi}+\frac{c}{2}|\widetilde{\varphi}|^2\widetilde{\varphi}-\frac{3}{16}| \widetilde{\varphi}|^4\widetilde{\varphi}=0.$$

For general $\sigma>0$, \eqref{eqs:gDNLS} is regarded as an extension of (DNLS) equation. In energy space $\H1$, Hayashi and Ozawa \cite{HaOz-16-DNLS} proved that the Cauchy problem \eqref{eqs:gDNLS} is local well-posedness for $\sigma>1$.  See also \cite{Hao, santos, HaOz-16-DNLS, LiPoSa-Local-DNLS} for the related results.
Moreover, the $H^1$-solution $u(t)$ of \eqref{eqs:gDNLS} satisfies three conservation laws:
$${E}(u(t))={E}(u_0),\qquad {P}(u(t))={P}(u_0),\qquad {M}(u(t))={M}(u_0),$$
for all $t\in [0,T)$, where
\begin{align*}
E(u)=&\frac{1}{2}\| \partial_xu\|_{L^2}^2-\frac{1}{2(\sigma+1)}\mbox{Im}\int_{\mathbb{R}}|u|^{2\sigma}u\,\overline{\partial_xu}\,dx,\\
P(u)=&\frac{1}{2}(i\partial_xu,u)_{L^2}=\frac{1}{2}\mbox{Im}\int_{\mathbb{R}}u\,\overline{\partial_xu}\,dx,\\
M(u)=&\frac{1}{2}\|u\|_{L^2}^2.
\end{align*}
By using the conservation laws, Fukaya, Hayashi and Inui \cite{FuHaIn-16-DNLS} investigated the global well-posedness of \eqref{eqs:gDNLS} in energy space $\H1$ for $\sigma>1$ with some suitable size restriction on the initial datum. Hayashi and Ozawa \cite{HaOz-16-DNLS} proved  the global existence (without uniqueness) in $H^1(\R)$ for $0<\sigma<1$.

Equation \eqref{eqs:gDNLS} also admits a two-parameter family of solitary wave solutions:
\begin{align*}
u_{\omega,c}(t,x)=e^{i\omega t}\phi_{\omega,c}(x-ct),
\end{align*}
where $\phi_{\omega,c}(x)=\varphi_{\omega,c}(x)e^{i\frac{c}{2}x-\frac{i}{2(\sigma+1)}\int_{-\infty}^{x}\varphi_{\omega,c}^{2\sigma}(y)dy}$. Here $\varphi_{\omega,c}$ is the solution of
\begin{align*}
-\partial_x^2\varphi+(\omega-\frac{c^2}{4})\varphi+\frac{c}{2}\varphi^{2\sigma+1}
-\frac{2\sigma+1}{(2\sigma+2)^2}\varphi^{4\sigma+1}=0.
\end{align*}

\subsection{Stability theory of gDNLS}
In the continuation of these works, there are many results about the stability theory of solitary wave solutions for the generalized derivative nonlinear Schr\"odinger equation.

When $\sigma=1$, Guo and Wu \cite{GuWu95} proved that $\widetilde{u}_{\omega,c}(t,x)$ is stable for $c<0$ and $c^2< 4\omega$. Further, Colin and Ohta \cite{CoOh-06-DNLS} proved that $\widetilde{u}_{\omega,c}(t,x)$ is stable for $c^2< 4\omega$. The endpoint case $c = 2\sqrt{\omega }$ was studied by Kwon and Wu \cite{Soonsik-W-2014}. Recently, the stability of the multi-solitons is studied by Le Coz, Wu \cite{Stefan-W-15-MultiSoliton-DNLS} (see also  Miao, Tang, Xu \cite{Miao-Tang-Xu:2016:DNLS} in the two-solitons case).

In an effort to understand the stability theory of (DNLS) equation, one may add a term $"b|u|^4u"$ with $b>0$ to (DNLS) equation, which brings some destabilizing effect. In this case, Ohta \cite{Ohta-14} showed that there exists $\kappa\in (0,1)$ such that $\widetilde{u}_{\omega,c}(t,x)$ is stable when $-2\sqrt{\omega}<c <2\kappa\sqrt{\omega }$, and unstable when $2\kappa\sqrt{\omega}<c <2\sqrt{\omega }$. Moreover, Ning, Ohta and Wu \cite{NOW16, NOW17} proved $\widetilde{u}_{\omega,c}(t,x)$ was unstable both in the borderline case $c = 2\kappa\sqrt{\omega }$ and in the endpoint case $c = 2\sqrt{\omega }$.

When $0<\sigma<1$, Liu, Simpson and Sulem \cite{LiSiSu1} proved that the solitary wave solution $u_{\omega,c}(t,x)$ is stable for any $-2\sqrt\omega<c<2\sqrt\omega$; when $\sigma\ge2$, the solitary wave solution $u_{\omega,c}(t,x)$ is unstable for any $-2\sqrt\omega<c<2\sqrt\omega$. Recently, Guo \cite{guo-DNLS} proved the stability of the solitary wave solutions in the endpoint case $c=2\sqrt\omega$, $\sigma\in(0,1)$.

When $1<\sigma<2$, Liu, Simpson and Sulem \cite{LiSiSu1} proved that there exists $z_0(\sigma)\in (0,1)$ such that the solitary wave solution $u_{\omega,c}(t,x)$ is stable when $-2\sqrt\omega<c<2z_0\sqrt\omega$, and unstable when $2z_0\sqrt\omega<c<2\sqrt\omega$.  Further, Fukaya \cite{Fu-16-DNLS} proved that the solitary waves solution $u_{\omega,c}(t,x)$ is unstable when $\frac76< \sigma<2$, $c=2z_0\sqrt\omega$. Moreover, Guo, Ning and Wu \cite{Guo-Ning-Wu-18-gDNLS} and Miao, Tang and Xu \cite{Miao-Tang-Xu18} independently proved that the solitary waves solution $u_{\omega,c}(t,x)$ is unstable for any $1<\sigma<2$ in borderline case $c=2z_0\sqrt\omega$. After these works, the stability theory when $c=2\sqrt\omega$, $\sigma\in(1,2)$ is unsolved.

\subsection{Statement of the results}
In this paper, we aim to the unsolved case
$$c=2\sqrt\omega, \qquad \sigma\in(1,2).$$
More precisely, let us define
\begin{align*}
u_c(t,x)=e^{i\frac{c^2}{4}t}\phi_{\frac{c^2}{4},c}(x-ct),% \label{solitary-wavesolution}
\end{align*}
where
\begin{align}\label{0.1}
\phi_{\frac{c^2}{4},c}(x)=\varphi_{\frac{c^2}{4},c}(x)e^{i\frac{c}{2}x-\frac{i}{2(\sigma+1)}\int_{-\infty}^{x}\varphi_{\frac{c^2}{4},c}^{2\sigma}(y)dy},
\end{align}
and
\begin{align}\label{0.2}
\varphi_{\frac{c^2}{4},c}(x)=\bigg(\frac{2c(\sigma+1)}{\sigma^2(cx)^2+1}\bigg)^\frac{1}{2\sigma}.
\end{align}
For simplicity, we denote $\varphi_{c}$ to be $\varphi_{\frac{c^2}{4},c}$ and $\phi_{c}$ to be $\phi_{\frac{c^2}{4},c}$ for short.
Note that $\phi_c$ is the solution of
\begin{align}\label{Elliptic-comp}
-\partial_x^2\phi+\frac{c^2}{4}\phi+ci\partial_x\phi-i|\phi|^{2\sigma}\partial_x\phi=0,
\end{align}
and
 $\varphi_c$ is the solution of
\begin{align}\label{Elliptic}
-\partial_x^2\varphi+\frac{c}{2}\varphi^{2\sigma+1}-\frac{2\sigma+1}{(2\sigma+2)^2}\varphi^{4\sigma+1}=0.
\end{align}

\begin{remark}\label{rem:1}
Compared with the case of $-2\sqrt\omega<c<2\sqrt\omega$,  the solution of the elliptic equation \eqref{Elliptic} $\varphi_c$ is "zero mass" in the endpoint case $c=2\sqrt\omega$. For the (DNLS) equation, there also appears "zero mass" in the endpoint case $c=2\sqrt\omega$, see \cite{NOW16, Wu2}.

From \eqref{0.2}, we know that $\phi_{c},\varphi_{c}\notin L^2(\R)$, when $\sigma\ge 2$. Hence, compared to the definitions of stability/instability  in the following, the analogous definitions should be given in a different way in the case of $\sigma\ge 2$.
\end{remark}

For $\varepsilon>0$, we define
$$ U_\varepsilon(\phi_c)
=\{u\in H^1(\mathbb{R}): \inf_{(\theta,y)\in\mathbb{R}^2}\|u-e^{i\theta}\phi_c(\cdot-y)\|_{H^1}<\varepsilon\}.$$

\begin{defn}
We say that the solitary wave solution $e^{i\frac{c^2}{4}t}\phi_c(x-ct)$ of \eqref{eqs:gDNLS} is stable
if for any $\varepsilon >0$ there exists $\delta >0$ such that if $u_0(x)\in U_\delta(\phi_c)$,
then the solution $u(t,x)$ of \eqref{eqs:gDNLS} with $u(0,x)=u_0(x)$ exists for all $t\in \R$,
and $u(t,x)\in U_\varepsilon(\phi_c)$ for all $t\in \R$.
Otherwise, $e^{i\frac{c^2}{4}t}\phi_c(x-ct)$ is said to be unstable.
\end{defn}

\begin{thm}\label{thm:mainthm}
Let $\sigma\in (1,2)$, then the solitary wave solution $e^{i\frac{c^2}{4}t}\phi_c(x-ct)$ of \eqref{eqs:gDNLS} is unstable.
\end{thm}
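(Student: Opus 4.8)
The plan is to run a Grillakis--Shatah--Strauss-type instability scheme, adapted to the two degeneracies of the endpoint regime. First I would exploit the scaling symmetry $u(t,x)\mapsto \lambda^{1/2\sigma}u(\lambda^2t,\lambda x)$, which maps solutions of \eqref{eqs:gDNLS} to solutions and, as one checks directly from \eqref{0.2}, sends $\phi_c$ to $\phi_{\lambda c}$. For fixed $\lambda$ this is a bounded invertible map of $H^1(\R)$ that conjugates the flow (up to the time rescaling $t\mapsto\lambda^2 t$) and carries the neighborhoods $U_\varepsilon(\phi_c)$ into neighborhoods $U_{\varepsilon'}(\phi_{\lambda c})$ with $\varepsilon'$ comparable to $\varepsilon$; hence it suffices to prove instability for one value of $c$, and I would normalize $c=2$, $\omega=1$. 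Since $\phi_c$ solves \eqref{Elliptic-comp}, it is a critical point of the action $S_c(u)=E(u)+\tfrac{c^2}{4}M(u)+cP(u)$, so $S_c'(\phi_c)=0$, and the dynamics near the orbit are governed by the linearized operator $L_c=S_c''(\phi_c)$ on $H^1(\R)$.

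The second step is the spectral analysis of $L_c$. Using the explicit profile \eqref{0.2} I would determine the Morse index and kernel: the two symmetry generators $i\phi_c$ (phase) and $\partial_x\phi_c$ (translation) lie in $\ker L_c$, and I would verify that they exhaust the kernel, that $L_c$ has exactly one negative eigenvalue, $n(L_c)=1$, and that the remainder of the spectrum is nonnegative. The decisive structural feature, flagged in Remark~\ref{rem:1}, is that $\varphi_c$ has \emph{zero mass}: because $\omega-c^2/4=0$ at the endpoint, the essential spectrum of $L_c$ is $[0,\infty)$ and touches the origin, so—unlike the interior case $|c|<2\sqrt{\omega}$—zero is \emph{not} isolated and the spectral gap underlying the usual coercivity is absent.

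The third step verifies the instability sign condition. Since the scaling forces $P(\phi_c)\propto c^{1/\sigma}$, $M(\phi_c)\propto c^{1/\sigma-1}$, $E(\phi_c)\propto c^{1+1/\sigma}$, one gets $d(c):=S_c(\phi_c)\propto c^{1+1/\sigma}$, so the relevant derivatives are available in closed form. Because the two-parameter family $\{\phi_{\omega,c}\}$ collapses to the one-dimensional endpoint curve $\{c=2\sqrt{\omega}\}$, I would interpret the Hessian criterion through a one-sided limit from the interior $c^2<4\omega$, where the Liu--Simpson--Sulem analysis applies and places $c=2\sqrt{\omega}$ beyond the borderline $c=2z_0\sqrt{\omega}$, in the regime where the interior solitons are unstable. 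Comparing $n(L_c)=1$ with the (limiting) signature of the Hessian then exhibits a genuine unstable direction, which I would combine with the negative eigenvector of $L_c$ to form the escape vector.

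The main obstacle is the zero-mass degeneracy feeding back into the dynamical argument: with the essential spectrum of $L_c$ reaching $0$, one cannot invert $L_c$ on the orthogonal complement of its kernel, and the standard coercivity $L_c\gtrsim\|\cdot\|_{H^1}^2$ modulo symmetries and the single negative mode fails. I would overcome this by working in weighted Sobolev spaces, exploiting the explicit algebraic decay $\varphi_c(x)\sim|x|^{-1/\sigma}$ from \eqref{0.2} to recover the needed lower bounds on the continuous-spectrum part, and by introducing a localized virial/momentum functional whose time derivative is controlled near the soliton. Feeding these estimates into a modulation decomposition $u(t)=e^{i\theta(t)}\bigl(\phi_c+\eta(t)\bigr)(\cdot-y(t))$ and running the Grillakis--Shatah--Strauss contradiction—an auxiliary functional that would stay bounded if the solution remained in $U_\varepsilon(\phi_c)$, yet is forced to grow by the sign condition—would complete the proof of instability for every $\sigma\in(1,2)$.
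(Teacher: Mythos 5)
Your GSS skeleton points in the right direction, but the two steps that carry the real content are exactly the ones you leave as programs rather than proofs, and both fail in the form you state them. First, the sign condition: you propose to read off the Hessian criterion ``through a one-sided limit from the interior $c^2<4\omega$''. Instability does not pass to limits of solitary waves, and here the limit is singular in a quantifiable way: as $c\to 2\sqrt{\omega}$ the derivatives $\partial_\omega P(\phi_{\omega,c})$ and $\partial_c P(\phi_{\omega,c})$ blow up and the directions $\partial_\omega\phi_{\omega,c}$, $\partial_c\phi_{\omega,c}$ cease to make sense --- this is precisely why the paper states the non-endpoint method of Liu--Simpson--Sulem cannot be continued to $c=2\sqrt{\omega}$. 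Even within the endpoint family itself, $\partial_c\phi_c$ grows like $\langle x\rangle^{1-\frac{1}{\sigma}}$ and is not in $L^2(\R)$, so the curve $c\mapsto\phi_c$ cannot by itself supply the unstable direction. The paper's core new idea, absent from your proposal, is the truncated direction $\psi=\phi_c+\mu\chi_R\partial_c\phi_c+\nu i\partial_x\phi_c$ of Proposition \ref{ND}: the cutoff makes $\psi\in H^1(\R)$, the parameters $\mu,\nu$ enforce $\langle M'(\phi_c),\psi\rangle=\langle P'(\phi_c),\psi\rangle=0$, and the decay estimates of the Appendix show the truncation errors are $O(R^{-\frac{2}{\sigma}+1})$, so that $\langle S_c''(\phi_c)\psi,\psi\rangle<0$ for $R$ large. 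The sign information actually used is computed exactly from the explicit profile \eqref{0.2}, not by limiting arguments: $\langle S_c''(\phi_c)\phi_c,\phi_c\rangle=-2\sigma(2-\sigma)c^2M(\phi_c)<0$ (Lemma \ref{MaxC}), while, perhaps contrary to your expectation, $\langle S_c''(\phi_c)\partial_c\phi_c,\partial_c\phi_c\rangle>0$ (Lemma \ref{S''-SomeV}).

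Second, the dynamical step. You correctly identify that the zero-mass degeneracy destroys the coercivity of $S_c''(\phi_c)$ modulo symmetries, but your remedy --- recovering lower bounds in weighted Sobolev spaces for an operator whose essential spectrum touches $0$, plus an unspecified localized virial --- is a research program with none of the estimates supplied; likewise your assertions that $n(L_c)=1$ and that the symmetries exhaust the kernel are stated without proof, and kernel non-degeneracy is delicate precisely in the zero-mass case. The paper bypasses all of this: no coercivity, no Morse index computation, and no spectral gap are ever needed. It runs Ohta's variational scheme instead: the characterization of $\phi_c$ as minimizer of $S_c$ on the set $\{K_c=0\}$ (Lemma \ref{relation}), Ohta's lemma that $\langle K_c'(\phi_c),v\rangle=0$ forces $\langle S_c''(\phi_c)v,v\rangle\ge 0$ --- whence $\langle K_c'(\phi_c),\psi\rangle\neq 0$ by Proposition \ref{ND} (3) --- and monotonicity of the auxiliary functional $A(u)=\langle iu,e^{i\theta}\psi(\cdot-y)\rangle$, which is bounded on $U_{\varepsilon_0}(\phi_c)$ yet satisfies $\partial_tA(u(t))\ge\delta>0$ along a solution starting at $u_0=\phi_c+\beta\psi$, a contradiction. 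Without the truncated direction $\psi$ and this variational bypass of coercivity, your outline cannot be completed as stated.
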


As described in Remark \ref{rem:1}, the new feature in the endpoint case is the ``zero mass" properties, which are related to both $\phi_{\omega,c}$ and the functional $S_{\omega,c}$ when $c=2\sqrt\omega$. This new feature brings obstacles in the study of the stability theory. It is worth to noting that the direction of neither $\partial_\omega\phi_{\omega,c}$ nor $\partial_c\phi_{\omega,c}$ makes sense when $c\to 2\sqrt{\omega}$. Especially,
 $\partial_\omega P(\phi_{\omega,c})$ and $\partial_c P(\phi_{\omega,c})$ go to infinity when $c\to 2\sqrt{\omega}$. This makes it impossible to handle this problem in the same way as the  non-endpoint case. Furthermore, in the endpoint case, a two-parameter family of solitary wave solutions ($\omega$ and $c$) degenerates into only one parameter family of solitary wave solutions. This causes the absence of a nature definition to the negative direction which is orthogonal with both $M'(\phi_c)$ and $P'(\phi_c)$.

Our argument is based on \cite{NOW16}, in which the authors constructed an auxiliary function and used the cut-off trick to define the negative direction. However,  the argument in \cite{NOW16} does not work for all $\sigma\in(1,2)$, but only applies when $\sigma$ is close to 1. To overcome the difficulty, we construct a new auxiliary function to solve the problem for any $\sigma\in (1,2)$.

This paper is organized as follows. In Section 2, we give the definitions of some important functionals and some useful lemmas. In Section 3, we construct the negative direction. In Section 4, we prove the Theorem 1.

\section{Preliminaries}

\subsection{Notations}
We use $A\lesssim B$ to denote an estimate of the form
$A\leq CB$ for some constant $C>0$. Also, we write O(A) to indicate any quantity A such that $|A|\lesssim B $.
%Similarly, we will write $X\sim Y$ to mean $X\lesssim Y$ and $Y\lesssim X$.
And we denote $\langle x\rangle=\sqrt{1+x^2}$ .

For a function $f(x)$, its $L^{q}$-norm $\|f\|_{L^q}=\Big(\displaystyle\int_{\mathbb{R}} |f(x)|^{q}dx\Big)^{\frac{1}{q}}$
and its $H^1$-norm $\|f\|_{H^1}=(\|f\|^2_{L^2}+\|\partial_x f\|^2_{L^2})^{\frac{1}{2}}$.
For $u,v\in L^2(\mathbb{R})=L^2(\mathbb{R,C})$, we define
$$(u,v)=\mbox{Re}\int_{\mathbb{R}}u(x)\overline{v(x)}\,dx$$
and regard $L^2(\mathbb{R})$ as a real Hilbert space.

From the definitions of $E$, $P$ and $M$, we have
\begin{align}
E'(u)=&-\partial_x^2u-i|u|^{2\sigma}\partial_xu,\notag\\%\label{E'}\\
P'(u)=&i\partial_xu,\label{P'}\\
M'(u)=&u\label{M'}.
\end{align}
Let
\begin{align*}
S_c(u)&=E(u)+cP(u)+\frac{c^2}{4}M(u),\\
K_c(u)&=\langle S'_c(u),u\rangle.
\end{align*}
Then, we have
\begin{align}\label{S'c}
S'_c(u)&=E'(u)+cP'(u)+\frac{c^2}{4}M'(u)\notag\\
&=-\partial_x^2u-i|u|^{2\sigma}\partial_x u+ci\partial_xu+\frac{c^2}{4}u,
\end{align}
and
\begin{align*}%\label{K-c}
K_c(u)=\| \partial_x u\|_{L^2}^2-( i|u|^{2\sigma}\partial_x u,u) +c (i\partial_xu,u)+\frac{c^2}{4}\| u\|_{L^2}^2.
\end{align*}
For the solution $\phi_c$ to \eqref{Elliptic-comp}, we have
\begin{align*}%\label{S'phic}
S'_c(\phi_c)=0,
\end{align*}
and thus $K_c(\phi_c)=0$. Moreover, by \eqref{S'c}, we obtain
\begin{align}
S''_c(\phi_c)f=&-\partial_x^2f+ci\partial_xf+\frac{c^2}{4}f-i\sigma|\phi_c|^{2\sigma-2}\overline{\phi_c}\partial_x \phi_c f\notag\\
&-i\sigma|\phi_c|^{2\sigma-2}\phi_c\partial_x \phi_c \overline f
-i |\phi_c|^{2\sigma}\partial_x f .\label{S''c}
\end{align}

\subsection{Useful Lemmas}
In this section, we prove some useful lemmas. %First, we have following formulas.
\begin{lem}\label{14.49}
	$S''_c(\phi_c)$ is self-adjoint, that is, for any $f,g\in\H1$,
	\begin{align*}
	\langle S''_c(\phi_c)f,g\rangle=\langle S''_c(\phi_c)g,f\rangle.
	\end{align*}
	Moreover,
	\begin{align}\label{S'M'P'}
	S''_c(\phi_{c})\partial_c\phi_c=-\frac{c}{2}M'(\phi_c)-P'(\phi_c).
	\end{align}
\end{lem}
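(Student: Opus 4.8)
The plan is to prove the two assertions in turn. For the self-adjointness the cleanest route is structural: $S_c$ is a real-valued $C^2$ functional on the real Hilbert space $\H1$, so its second Fr\'echet derivative $S''_c(\phi_c)$ is automatically a symmetric bilinear form, which is exactly $\langle S''_c(\phi_c)f,g\rangle=\langle S''_c(\phi_c)g,f\rangle$. If one instead verifies this by hand from \eqref{S''c}, I would write $\langle S''_c(\phi_c)f,g\rangle=\mathrm{Re}\int(S''_c(\phi_c)f)\overline{g}\,dx$ and integrate by parts term by term: the contributions of $-\partial_x^2f$ and $\tfrac{c^2}{4}f$ are manifestly symmetric, and $ci\partial_xf$ becomes symmetric after one integration by parts upon taking the real part. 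The only delicate point is that integrating by parts the transport nonlinearity $-i|\phi_c|^{2\sigma}\partial_xf$ generates an extra factor $\partial_x(|\phi_c|^{2\sigma})$; this piece must be combined with the two zeroth-order terms $-i\sigma|\phi_c|^{2\sigma-2}\overline{\phi_c}\partial_x\phi_c\,f$ and $-i\sigma|\phi_c|^{2\sigma-2}\phi_c\partial_x\phi_c\,\overline{f}$, after which the reality of $|\phi_c|^{2\sigma}$ and the product rule $\partial_x|\phi_c|^{2\sigma}=\sigma|\phi_c|^{2\sigma-2}(\overline{\phi_c}\partial_x\phi_c+\phi_c\overline{\partial_x\phi_c})$ produce the required symmetry.

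For the identity \eqref{S'M'P'} I would differentiate in $c$ the relation $S'_c(\phi_c)=0$, valid along the entire one-parameter family. Writing $S'_c(u)=E'(u)+cP'(u)+\tfrac{c^2}{4}M'(u)$ and applying the chain rule to $c\mapsto S'_c(\phi_c)$, the explicit $c$-dependence contributes $P'(\phi_c)+\tfrac{c}{2}M'(\phi_c)$, while differentiating through the profile contributes $\bigl(E''(\phi_c)+cP''(\phi_c)+\tfrac{c^2}{4}M''(\phi_c)\bigr)\partial_c\phi_c=S''_c(\phi_c)\partial_c\phi_c$, where $P''=i\partial_x$ and $M''=\mathrm{Id}$ because $P$ and $M$ are quadratic. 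Setting the total $c$-derivative to zero and rearranging gives precisely $S''_c(\phi_c)\partial_c\phi_c=-\tfrac{c}{2}M'(\phi_c)-P'(\phi_c)$.

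The hard part will be making sense of $\partial_c\phi_c$ in the endpoint regime, which is exactly the degeneracy flagged in Remark \ref{rem:1}. Differentiating the explicit profile \eqref{0.1}--\eqref{0.2}, the phase $\theta_c(x)=\tfrac{c}{2}x-\tfrac{1}{2(\sigma+1)}\int_{-\infty}^{x}\varphi_c^{2\sigma}(y)\,dy$ has a $c$-derivative that grows linearly (its leading term is $x/2$), so that $\partial_c\phi_c$ grows like $|x|^{1-1/\sigma}$ at spatial infinity and fails to belong to $L^2(\R)$ for $\sigma\in(1,2)$. Consequently \eqref{S'M'P'} cannot be read in the usual $\H1\to H^{-1}$ duality; I would instead justify it as a pointwise identity between smooth functions, observing that the right-hand side $-\tfrac{c}{2}\phi_c-i\partial_x\phi_c$ does lie in $L^2(\R)$, so that the claim encodes a genuine cancellation in which the linearly growing part of $\partial_c\phi_c$ is annihilated by $S''_c(\phi_c)$. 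The step demanding the most care is thus verifying that the $c$-differentiation of \eqref{Elliptic-comp} is legitimate pointwise in $x$---guaranteed by the smooth, explicit dependence of $\phi_c$ on $c$---and that these cancellations survive the slow spatial decay.
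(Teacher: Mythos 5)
Your proposal matches the paper's proof: the self-adjointness is obtained there from the symmetry of mixed partials $\partial_t\partial_s S_c(\phi_c+sg+tf)=\partial_s\partial_t S_c(\phi_c+sg+tf)$ at $t=s=0$ (your Fr\'echet-symmetry argument in structural form), and \eqref{S'M'P'} is likewise obtained by differentiating $S'_c(\phi_c)=0$ in $c$. Your additional remarks on the slow growth $|\partial_c\phi_c|\lesssim\langle x\rangle^{1-\frac{1}{\sigma}}$ and the resulting failure of $L^2$ membership are correct and consistent with the paper (cf.\ \eqref{10.21} and the cutoff $\chi_R$ introduced in Section \ref{sec:3}), just more careful than the paper's terse proof.
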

\begin{proof}
	Note that
	$$
	\partial_t\partial_sS_c(\phi_c+sg+tf)=\partial_s\partial_tS_c(\phi_c+sg+tf).
	$$
	Then, taking $t=s=0$, we get the first formula in the lemma .
	
	From $S'_c(\phi_{c})=0$, and differentiating it with respect to $c$, we know that
	\begin{align*}
	S''_c(\phi_{c})\partial_c\phi_c=-\frac{c}{2}M'(\phi_c)-P'(\phi_c).
	\end{align*}
	This finishes the proof.
\end{proof}

\begin{lem}
	Let $\sigma\in(1,2)$, then we have
	\begin{align}\label{PM}
	P(\phi_c)=\frac c2(1-\sigma)M(\phi_c),
	\end{align}
	and
	\begin{align}\label{phiM}
	\|\phi_c\|_{L^{2\sigma+2}}^{2\sigma+2}=2c(\sigma+1)(2-\sigma)M(\phi_c).
	\end{align}
	Moreover,
	\begin{align}\label{PcMc}
	\partial_cP(\phi_c)=\frac c2\partial_cM(\phi_c).
	\end{align}
\end{lem}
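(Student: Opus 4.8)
The plan is to reduce all three identities to integrals of the real profile $\varphi_c$ and then exploit the explicit formula \eqref{0.2}. Since the exponential factor in \eqref{0.1} is a pure phase, $|\phi_c|=\varphi_c$, so that $M(\phi_c)=\tfrac12\int_\R\varphi_c^2\,dx$ and $\|\phi_c\|_{L^{2\sigma+2}}^{2\sigma+2}=\int_\R\varphi_c^{2\sigma+2}\,dx$. First I would compute $P(\phi_c)$ directly from its definition $P(u)=\tfrac12\mathrm{Im}\int_\R u\,\overline{\partial_x u}\,dx$. Writing $\phi_c=\varphi_c e^{i\theta}$ with $\theta'(x)=\tfrac c2-\tfrac1{2(\sigma+1)}\varphi_c^{2\sigma}$, a short computation gives $\mathrm{Im}(\phi_c\overline{\partial_x\phi_c})=-\theta'\varphi_c^2$, whence
\begin{align*}
P(\phi_c)=-\frac c2 M(\phi_c)+\frac{1}{4(\sigma+1)}\|\phi_c\|_{L^{2\sigma+2}}^{2\sigma+2}.
\end{align*}
This single relation shows that \eqref{PM} and \eqref{phiM} are equivalent: substituting \eqref{phiM} into it collapses the right-hand side to $\tfrac c2(1-\sigma)M(\phi_c)$. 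Hence the heart of the matter is \eqref{phiM}.

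To prove \eqref{phiM} I would work entirely from \eqref{0.2}. Set $f_a=\int_\R(\sigma^2c^2x^2+1)^{-a}\,dx$, so that $\int_\R\varphi_c^2\,dx=(2c(\sigma+1))^{1/\sigma}f_{1/\sigma}$ and $\int_\R\varphi_c^{2\sigma+2}\,dx=(2c(\sigma+1))^{(\sigma+1)/\sigma}f_{1/\sigma+1}$. Integrating the elementary identity $\tfrac{d}{dx}\big[x(\sigma^2c^2x^2+1)^{-a}\big]=(1-2a)(\sigma^2c^2x^2+1)^{-a}+2a(\sigma^2c^2x^2+1)^{-a-1}$ over $\R$ (the boundary term vanishing since $a=1/\sigma>1/2$ for $\sigma<2$) yields the recursion $f_{a+1}=\tfrac{2a-1}{2a}f_a$; taking $a=1/\sigma$ gives $f_{1/\sigma+1}=\tfrac{2-\sigma}{2}f_{1/\sigma}$. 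Dividing the two integrals then produces $\int_\R\varphi_c^{2\sigma+2}\,dx=c(\sigma+1)(2-\sigma)\int_\R\varphi_c^2\,dx$, which is exactly \eqref{phiM}. Substituting back into the displayed expression for $P(\phi_c)$ gives \eqref{PM}. I expect this integration-by-parts identity---pinning down the exact constant $(2-\sigma)/2$---to be the main (and essentially the only) obstacle; everything else is bookkeeping.

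Finally, for \eqref{PcMc} I would use the scaling structure of \eqref{0.2}: writing $\varphi_c(x)=c^{1/(2\sigma)}g(cx)$ for the fixed profile $g(y)=\big(2(\sigma+1)/(\sigma^2y^2+1)\big)^{1/(2\sigma)}$, a change of variables gives $M(\phi_c)=A\,c^{1/\sigma-1}$ with $A=\tfrac12\|g\|_{L^2}^2$ independent of $c$. Hence $\partial_cM(\phi_c)=\tfrac{1-\sigma}{\sigma}\tfrac{M(\phi_c)}{c}$, so that $\tfrac c2\partial_cM(\phi_c)=\tfrac{1-\sigma}{2\sigma}M(\phi_c)$. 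On the other hand, differentiating \eqref{PM} in $c$ and inserting this expression for $\partial_cM(\phi_c)$ gives $\partial_cP(\phi_c)=\tfrac{1-\sigma}{2}M(\phi_c)+\tfrac c2(1-\sigma)\partial_cM(\phi_c)=\tfrac{1-\sigma}{2\sigma}M(\phi_c)$ as well, which establishes \eqref{PcMc}.
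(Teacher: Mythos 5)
Your proposal is correct and follows essentially the same route as the paper: the key ingredients are identical, namely the phase computation giving $P(\phi_c)=-\frac c2 M(\phi_c)+\frac{1}{4(\sigma+1)}\|\varphi_c\|_{L^{2\sigma+2}}^{2\sigma+2}$, the integration-by-parts identity for $\int_{\R}(x^2+1)^{-a}dx$ (your recursion $f_{a+1}=\frac{2a-1}{2a}f_a$ is the paper's $B_\sigma=(1-\frac\sigma2)A_\sigma$), and differentiating the scaling law $M(\phi_c)\propto c^{\frac1\sigma-1}$ together with \eqref{PM} to get \eqref{PcMc}. The only differences are cosmetic---you establish \eqref{phiM} before \eqref{PM} and obtain the $c$-dependence of $M$ by a scaling argument rather than explicit evaluation of the integral.
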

\begin{proof}
	First, we use the definiton of $M$  and the explicit formula \eqref{0.2} to derive
	\begin{align}\label{M}
	M(\phi_c)&=\frac{1}{2}\|\phi_c\|_{L^2}^2=\frac{1}{2}\|\varphi_c\|_{L^2}^2\notag\\
	&=\frac{1}{2}\int_{\mathbb{R}}\bigg(\frac{2c(\sigma+1)}{\sigma^2(cx)^2+1}\bigg)^{\frac{1}{\sigma}}dx\notag\\
	&=\frac{1}{2}\sigma^{-1}(2\sigma+2)^{\frac{1}{\sigma}}A_\sigma\, c^{\frac{1}{\sigma}-1},
	\end{align}
	where $A_\sigma=\int_{\mathbb{R}}(x^2+1)^{-\frac{1}{\sigma}}dx>0$.
	
	Next, by \eqref{0.1}, we have
	\begin{align}\label{AAB}
	\partial_x\phi_c(x)=e^{i\frac{c}{2}x-\frac{i}{2(\sigma+1)}\int_{-\infty}^{x}\varphi_{c}^{2\sigma}(y)dy}\big[\big(i\frac{c}{2}-\frac{i}{2\sigma+2}\varphi_c^{2\sigma}\big)\varphi_c+\partial_x\varphi_c \big].
	\end{align}
	Then, combining with the definition of $P$ yields
	\begin{align}\label{P}
	P(\phi_c)&=-\frac{c}{2}M(\phi_c)+\frac{1}{2(2\sigma+2)}\|\varphi_c\|_{L^{2\sigma+2}}^{2\sigma+2}\notag\\
	&=-\frac{c}{2}M(\phi_c)+\frac{1}{2(2\sigma+2)}(2\sigma+2)^{\frac 1\sigma+1}c^{\frac 1\sigma+1}\int_{\mathbb{R}}\bigg(\frac{1}{\sigma^2(cx)^2+1}\bigg)^{\frac 1\sigma+1}dx \notag\\
	&=-\frac{c}{2}M(\phi_c)+\frac{1}{2}\sigma^{-1}(2\sigma+2)^{\frac{1}{\sigma}}B_\sigma\,c^{\frac{1}{\sigma}},
	\end{align}
	where $B_\sigma=\int_{\mathbb{R}}(x^2+1)^{-\frac{1}{\sigma}-1}dx$.
	
	The fundamental observation is that
	\begin{align}\label{AAA}
	\frac{d [x(x^2+1)^{-\frac{1}{\sigma}}]}{dx}&=(1-\frac{2}{\sigma})(x^2+1)^{-\frac{1}{\sigma}}+\frac{2}{\sigma}(x^2+1)^{-\frac{1}{\sigma}-1}.
	\end{align}
	Integration of \eqref{AAA} with $x$ for $\sigma\in(1,2)$ yields
	$$\frac{2}{\sigma} \int_{\mathbb{R}}(x^2+1)^{-\frac{1}{\sigma}-1}dx=(\frac{2}{\sigma}-1)\int_{\mathbb{R}}(x^2+1)^{-\frac{1}{\sigma}}dx.
	$$
	That is
	\begin{align}\label{B}
	B_\sigma=(1-\frac{\sigma}{2})A_\sigma.
	\end{align}
	Together with \eqref{M}, \eqref{P} and \eqref{B}, we have
	\begin{align*}
	P(\phi_c)=-\frac{c}{2}M(\phi_c)+c(1-\frac{\sigma}{2})M(\phi_c)
	=\frac c2(1-\sigma)M(\phi_c).
	\end{align*}
	Moreover, from \eqref{P}, we have
	\begin{align*}
	\|\phi_c\|_{L^{2\sigma+2}}^{2\sigma+2}&=\|\varphi_c\|_{L^{2\sigma+2}}^{2\sigma+2}=4(\sigma+1)\big[\frac c2M(\phi_c)+P(\phi_c)\big]\\
	&=2c(\sigma+1)(2-\sigma)M(\phi_c).
	\end{align*}
	On the other hand, differentiating \eqref{M} with respect to $c$, we have
	\begin{align*}
	\partial_cM(\phi_c)&=\frac{1}{2}\sigma^{-1}(2\sigma+2)^{\frac{1}{\sigma}}A_\sigma\, c^{\frac{1}{\sigma}-2}(\frac 1\sigma-1)\\
	&=c^{-1}(\frac 1\sigma-1)M(\phi_c).
	\end{align*}
	That is
	\begin{align}\label{M'cM}
	M(\phi_c)=c\sigma(1-\sigma)^{-1}\partial_cM(\phi_c).
	\end{align}
	Finally, differentiating \eqref{PM} with respect to $c$ and together with \eqref{M'cM} yields
	\begin{align*}
	\partial_cP(\phi_c)&=\frac 12(1-\sigma)M(\phi_c)+\frac c2(1-\sigma)\partial_cM(\phi_c)\\
	&=\frac 12(1-\sigma)c\sigma(1-\sigma)^{-1}\partial_cM(\phi_c)+\frac c2(1-\sigma)\partial_cM(\phi_c)\\
	&=\frac c2\partial_cM(\phi_c).
	\end{align*}
	This completes the proof.
	%Using \eqref{S'M'P'}, \eqref{M}, \eqref{P} and \eqref{B}, we obtain
	%\begin{align*}
	%\langle S''_c(\phi_c)\partial_c\phi_{c},\partial_c\phi_{c}\rangle&=-\frac c2\partial_c M(\phi_c)-\partial_cP(\phi_c)\\
	%&=-\frac{c}{2}\partial_cM(\phi_c)+\frac{c}{2}\partial_cM(\phi_c)+\frac 12 M(\phi_c)
	%-\frac{1}{2}\sigma^{-2}(2\sigma+2)^{\frac{1}{\sigma}}B_\sigma\,c^{\frac{1}{\sigma}-1}\notag\\
	%&=\frac{1}{4}\sigma^{-1}(2\sigma+2)^{\frac{1}{\sigma}}A_\sigma\, c^{\frac{1}{\sigma}-1}
	%-\frac{1}{2}\sigma^{-2}(2\sigma+2)^{\frac{1}{\sigma}}B_\sigma\,c^{\frac{1}{\sigma}-1}\notag\\
	%&=\frac{1}{4}\sigma^{-2}(2\sigma+2)^{\frac{1}{\sigma}}\left(\sigma A_\sigma-2B_\sigma\right)c^{\frac{1}{\sigma}-1}\\
	%&=\frac{1}{4}\sigma^{-2}(2\sigma+2)^{\frac{1}{\sigma}}(\sigma A_\sigma-2(1-\frac \sigma 2)B_\sigma)\\
	%&=\frac{1}{4}\sigma^{-2}(2\sigma+2)^{\frac{1}{\sigma}}2(\sigma-1)A_\sigma.
	%\end{align*}
\end{proof}

\begin{lem}\label{S''-SomeV}
	Let $\sigma\in(1,2)$, then
	\begin{align*}
	\langle S''_c(\phi_c)\partial_c\phi_{c},\partial_c\phi_{c}\rangle>0.
	\end{align*}
\end{lem}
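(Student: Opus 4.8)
The plan is to compute the quadratic form $\langle S''_c(\phi_c)\partial_c\phi_c,\partial_c\phi_c\rangle$ directly by exploiting the identity \eqref{S'M'P'} from Lemma \ref{14.49}, which converts the second-derivative expression into a pairing involving only $M'$ and $P'$. **First I would** apply the self-adjointness together with \eqref{S'M'P'} to write
\[
\langle S''_c(\phi_c)\partial_c\phi_c,\partial_c\phi_c\rangle
=\Big\langle -\tfrac{c}{2}M'(\phi_c)-P'(\phi_c),\,\partial_c\phi_c\Big\rangle
=-\tfrac{c}{2}\langle M'(\phi_c),\partial_c\phi_c\rangle-\langle P'(\phi_c),\partial_c\phi_c\rangle.
\]
Since $M'(u)=u$ and $P'(u)=i\partial_x u$, the two pairings are just derivatives of the conserved quantities: $\langle M'(\phi_c),\partial_c\phi_c\rangle=\partial_c M(\phi_c)$ and $\langle P'(\phi_c),\partial_c\phi_c\rangle=\partial_c P(\phi_c)$. **This reduces the whole form** to
\[
\langle S''_c(\phi_c)\partial_c\phi_c,\partial_c\phi_c\rangle
=-\tfrac{c}{2}\partial_c M(\phi_c)-\partial_c P(\phi_c).
\]

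**Next I would** invoke \eqref{PcMc}, namely $\partial_c P(\phi_c)=\tfrac{c}{2}\partial_c M(\phi_c)$, to collapse this to
\[
\langle S''_c(\phi_c)\partial_c\phi_c,\partial_c\phi_c\rangle
=-\tfrac{c}{2}\partial_c M(\phi_c)-\tfrac{c}{2}\partial_c M(\phi_c)
=-c\,\partial_c M(\phi_c).
\]
**It then remains** to determine the sign of $\partial_c M(\phi_c)$. From the explicit computation \eqref{M} we have $M(\phi_c)=\tfrac{1}{2}\sigma^{-1}(2\sigma+2)^{1/\sigma}A_\sigma\,c^{1/\sigma-1}$ with $A_\sigma>0$, so differentiating gives $\partial_c M(\phi_c)=c^{-1}(\tfrac{1}{\sigma}-1)M(\phi_c)$. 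Because $c>0$ and $M(\phi_c)>0$, the sign of $\partial_c M(\phi_c)$ is governed by $\tfrac{1}{\sigma}-1$, which is \emph{negative} precisely when $\sigma>1$. Hence for $\sigma\in(1,2)$ we get $\partial_c M(\phi_c)<0$, and therefore $-c\,\partial_c M(\phi_c)>0$, as claimed.

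**The only subtle point**, and what I expect to be the main obstacle, is the justification of \eqref{S'M'P'} and the differentiation-under-the-pairing steps rather than any genuinely hard estimate: one must be sure that $\partial_c\phi_c$ is a legitimate $H^1$ direction so that the pairings $\langle M'(\phi_c),\partial_c\phi_c\rangle$ and $\langle P'(\phi_c),\partial_c\phi_c\rangle$ really equal $\partial_c M(\phi_c)$ and $\partial_c P(\phi_c)$. In the endpoint case $c=2\sqrt{\omega}$ the paper has already flagged (in Remark \ref{rem:1} and the discussion after Theorem \ref{thm:mainthm}) that $\partial_c\phi_{\omega,c}$ behaves delicately, but here we differentiate only along the single-parameter family $\phi_c$, for which the explicit formulas \eqref{0.1}--\eqref{0.2} show smooth dependence on $c>0$; once this regularity is in hand the rest is the short algebraic chain above. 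The computation is otherwise entirely self-contained, using only Lemma \ref{14.49}, \eqref{PcMc}, and \eqref{M}.
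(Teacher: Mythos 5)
Your argument is correct and essentially the paper's own: both begin from \eqref{S'M'P'} to reduce the quadratic form to $-\frac{c}{2}\partial_cM(\phi_c)-\partial_cP(\phi_c)$, and your shortcut via \eqref{PcMc} (collapsing to $-c\,\partial_cM(\phi_c)$ and then reading off the sign from \eqref{M}) is just a repackaging of the paper's direct substitution of \eqref{M}, \eqref{P} and \eqref{B}, yielding the identical value $\frac{1}{2}\sigma^{-2}(\sigma-1)(2\sigma+2)^{\frac{1}{\sigma}}A_\sigma\,c^{\frac{1}{\sigma}-1}>0$. One small correction to your closing remark: $\partial_c\phi_c$ is in fact \emph{not} an $H^1$ direction (the paper notes $\partial_c\phi_c\notin L^2(\R)$ at the start of Section \ref{sec:3}, since $|\partial_c\phi_c|\lesssim\langle x\rangle^{1-\frac{1}{\sigma}}$ grows for $\sigma>1$), yet the pairings with $M'(\phi_c)=\phi_c$ and $P'(\phi_c)=i\partial_x\phi_c$ still converge and equal $\partial_cM(\phi_c)$ and $\partial_cP(\phi_c)$, because the growing contributions to $\partial_c\phi_c$ in \eqref{10.18} are purely imaginary relative to the phase of $\phi_c$ and cancel in the real pairing, leaving integrands of size $\langle x\rangle^{-\frac{2}{\sigma}}$, which are integrable for $\sigma<2$.
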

\begin{proof}
	Using \eqref{S'M'P'}, we get
	\begin{align}\label{S-M-P}
	\langle S''_c(\phi_c)\partial_c\phi_{c},\partial_c\phi_{c}\rangle=-\frac{c}{2}\partial_cM(\phi_c)-\partial_cP(\phi_c).
	\end{align}
	From \eqref{M} and \eqref{P}, we have
	\begin{align*}%\label{cM}
	-\frac{c}{2}\partial_cM(\phi_c)-\partial_cP(\phi_c)
	&=-\frac{c}{2}\partial_cM(\phi_c)+\frac{c}{2}\partial_cM(\phi_c)+\frac 12 M(\phi_c)
	-\frac{1}{2}\sigma^{-2}(2\sigma+2)^{\frac{1}{\sigma}}B_\sigma\,c^{\frac{1}{\sigma}-1}\notag\\
	&=\frac{1}{4}\sigma^{-1}(2\sigma+2)^{\frac{1}{\sigma}}A_\sigma\, c^{\frac{1}{\sigma}-1}
	-\frac{1}{2}\sigma^{-2}(2\sigma+2)^{\frac{1}{\sigma}}B_\sigma\,c^{\frac{1}{\sigma}-1}\notag\\
	&=\frac{1}{4}\sigma^{-2}(2\sigma+2)^{\frac{1}{\sigma}}\left(\sigma A_\sigma-2B_\sigma\right)c^{\frac{1}{\sigma}-1}.
	\end{align*}
	%For convenience, we denote $I(\sigma)$ to be:
	%\begin{align*}
	%  I(\sigma)=\sigma A_\sigma-2B_\sigma=\sigma\int_{\mathbb{R}}(x^2+1)^{-\frac{1}{\sigma}}dx-2\int_{\mathbb{R}}(x^2+1)^{-\frac{1}{\sigma}-1}dx.
	%\end{align*}
	%Thus, we have
	%\begin{align*}
	%  I(\sigma)=2(\sigma-1) A_\sigma>0.
	%\end{align*}
	Combining with \eqref{S-M-P}, \eqref{B} and \eqref{M}, we have
	\begin{align*}
	\langle S''_c(\phi_c)\partial_c\phi_{c},\partial_c\phi_{c}\rangle
	&=\frac{1}{4}\sigma^{-2}(2\sigma+2)^{\frac{1}{\sigma}}\big[\sigma A_\sigma-2(1-\frac \sigma 2)B_\sigma\big]\\
	&=\frac{1}{4}\sigma^{-2}(2\sigma+2)^{\frac{1}{\sigma}}2(\sigma-1)A_\sigma>0.
	\end{align*}
	This completes the proof.
\end{proof}

\begin{lem}\label{MaxC}
	Let $\sigma\in(1,2)$, then
	$$\langle S''_c(\phi_c)\phi_c,\phi_c\rangle <0.$$
\end{lem}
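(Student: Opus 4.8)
The plan is to reduce the quadratic form $\langle S''_c(\phi_c)\phi_c,\phi_c\rangle$ to a single explicit integral and then pin down its sign. First I would substitute $f=\phi_c$ into the expression \eqref{S''c} for $S''_c(\phi_c)$. Since $\overline{\phi_c}\phi_c=\phi_c\overline{\phi_c}=|\phi_c|^2$, the two middle terms coalesce and
\[
S''_c(\phi_c)\phi_c=-\partial_x^2\phi_c+ci\partial_x\phi_c+\tfrac{c^2}{4}\phi_c-i(2\sigma+1)|\phi_c|^{2\sigma}\partial_x\phi_c.
\]
Now I would invoke the profile identity $S'_c(\phi_c)=0$, which by \eqref{S'c} reads $-\partial_x^2\phi_c+ci\partial_x\phi_c+\tfrac{c^2}{4}\phi_c=i|\phi_c|^{2\sigma}\partial_x\phi_c$. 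Substituting this collapses the expression to $S''_c(\phi_c)\phi_c=-2i\sigma|\phi_c|^{2\sigma}\partial_x\phi_c$, and pairing with $\phi_c$ in the real inner product (using $\mathrm{Re}(iz)=-\mathrm{Im}\,z$) gives
\[
\langle S''_c(\phi_c)\phi_c,\phi_c\rangle=2\sigma\,\mathrm{Im}\int_{\R}|\phi_c|^{2\sigma}\partial_x\phi_c\,\overline{\phi_c}\,dx.
\]

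Next I would evaluate this integral with the polar representation. Writing $\phi_c=e^{i\theta}\varphi_c$ as in \eqref{0.1} and using \eqref{AAB}, the phase factors cancel in $\partial_x\phi_c\,\overline{\phi_c}$ and only the imaginary part of the bracket $\big(i\tfrac c2-\tfrac{i}{2\sigma+2}\varphi_c^{2\sigma}\big)\varphi_c+\partial_x\varphi_c$ survives, yielding
\[
\mathrm{Im}\int_{\R}|\phi_c|^{2\sigma}\partial_x\phi_c\,\overline{\phi_c}\,dx=\tfrac{c}{2}\|\varphi_c\|_{L^{2\sigma+2}}^{2\sigma+2}-\tfrac{1}{2\sigma+2}\|\varphi_c\|_{L^{4\sigma+2}}^{4\sigma+2}.
\]
Thus the sign of $\langle S''_c(\phi_c)\phi_c,\phi_c\rangle$ is governed by the competition between the $L^{2\sigma+2}$ and $L^{4\sigma+2}$ norms of $\varphi_c$.

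The crux is therefore to relate these two nonlinear norms, and for this I would extract two identities from the elliptic equation \eqref{Elliptic}. Multiplying \eqref{Elliptic} by $\varphi_c$ and integrating gives $\|\partial_x\varphi_c\|_{L^2}^2+\tfrac c2\|\varphi_c\|_{L^{2\sigma+2}}^{2\sigma+2}=\tfrac{2\sigma+1}{(2\sigma+2)^2}\|\varphi_c\|_{L^{4\sigma+2}}^{4\sigma+2}$; while the first integral of \eqref{Elliptic} (multiply by $\partial_x\varphi_c$, integrate, and use the vanishing boundary terms, legitimate since $\varphi_c,\partial_x\varphi_c\to0$) gives $\|\partial_x\varphi_c\|_{L^2}^2=\tfrac{c}{2\sigma+2}\|\varphi_c\|_{L^{2\sigma+2}}^{2\sigma+2}-\tfrac{1}{(2\sigma+2)^2}\|\varphi_c\|_{L^{4\sigma+2}}^{4\sigma+2}$. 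Eliminating $\|\partial_x\varphi_c\|_{L^2}^2$ between these two relations yields the clean proportionality $\|\varphi_c\|_{L^{4\sigma+2}}^{4\sigma+2}=c(\sigma+2)\|\varphi_c\|_{L^{2\sigma+2}}^{2\sigma+2}$. Feeding this back in, the integral becomes $\big(\tfrac c2-\tfrac{c(\sigma+2)}{2\sigma+2}\big)\|\varphi_c\|_{L^{2\sigma+2}}^{2\sigma+2}=-\tfrac{c}{2\sigma+2}\|\varphi_c\|_{L^{2\sigma+2}}^{2\sigma+2}$, so that $\langle S''_c(\phi_c)\phi_c,\phi_c\rangle=-\tfrac{\sigma c}{\sigma+1}\|\varphi_c\|_{L^{2\sigma+2}}^{2\sigma+2}<0$ since $c>0$, which is the claim.

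I expect the main obstacle to be this last step, namely obtaining the exact proportionality constant $c(\sigma+2)$, which requires combining the virial and first-integral identities with care. A viable alternative is to compute $\|\varphi_c\|_{L^{4\sigma+2}}^{4\sigma+2}$ and $\|\varphi_c\|_{L^{2\sigma+2}}^{2\sigma+2}$ directly from the explicit profile \eqref{0.2} and reduce their ratio through the same integration-by-parts recurrence already used in \eqref{AAA}. A secondary point demanding attention is the sign bookkeeping of the imaginary parts when passing from the operator $S''_c(\phi_c)\phi_c$ to the scalar quantity.
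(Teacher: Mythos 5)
Your proposal is correct, and its first half coincides exactly with the paper's: substituting $f=\phi_c$ into \eqref{S''c} and eliminating the linear part via \eqref{Elliptic-comp} gives $S''_c(\phi_c)\phi_c=-2i\sigma|\phi_c|^{2\sigma}\partial_x\phi_c$, whence everything hinges on the sign of $\mathrm{Im}\int_\R|\phi_c|^{2\sigma}\phi_c\overline{\partial_x\phi_c}\,dx$. Where you diverge is in evaluating that integral. The paper pairs \eqref{Elliptic-comp} with $\overline{x\partial_x\phi_c}$ and $\overline{\phi_c}$ to obtain the virial identity \eqref{22} and the identity \eqref{18}, expressing the integral as $c^2M(\phi_c)+2cP(\phi_c)$, and then invokes the momentum--mass relation \eqref{PM}, which itself rests on the explicit profile \eqref{0.2}, the beta-type integrals $A_\sigma,B_\sigma$, and the recurrence \eqref{AAA}. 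You instead pass to the real profile via the gauge representation \eqref{AAB}, so the integral becomes $\frac{c}{2}\|\varphi_c\|_{L^{2\sigma+2}}^{2\sigma+2}-\frac{1}{2\sigma+2}\|\varphi_c\|_{L^{4\sigma+2}}^{4\sigma+2}$ (with a sign flip consistent with $\mathrm{Im}(\partial_x\phi_c\overline{\phi_c})=-\mathrm{Im}(\phi_c\overline{\partial_x\phi_c})$, which you track correctly), and then extract from \eqref{Elliptic} both the $\varphi_c$-multiplier identity and the first integral --- whose constant of integration vanishes precisely because of the zero-mass structure $\omega=c^2/4$, and whose use is legitimate given the decay $\varphi_c\sim\langle x\rangle^{-1/\sigma}$ from \eqref{0.2} for $\sigma\in(1,2)$. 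Eliminating $\|\partial_x\varphi_c\|_{L^2}^2$ indeed yields $\|\varphi_c\|_{L^{4\sigma+2}}^{4\sigma+2}=c(\sigma+2)\|\varphi_c\|_{L^{2\sigma+2}}^{2\sigma+2}$ (I verified the algebra), giving $\langle S''_c(\phi_c)\phi_c,\phi_c\rangle=-\frac{\sigma c}{\sigma+1}\|\varphi_c\|_{L^{2\sigma+2}}^{2\sigma+2}<0$ since $c>0$; this agrees with the paper's $-2\sigma(2-\sigma)c^2M(\phi_c)$ by \eqref{phiM}, a useful consistency check. What each route buys: yours is self-contained and more elementary for this lemma --- it needs only the ODE \eqref{Elliptic} and avoids the $P$--$M$ relation and the explicit $A_\sigma,B_\sigma$ computations, and it makes visible where $\sigma<2$ enters (integrability of $\varphi_c^{2\sigma+2}$ and positivity of the final coefficient rather than a factor $(2-\sigma)$); the paper's route is economical in context, since \eqref{22}, \eqref{18} and \eqref{PM} are reused elsewhere (e.g., in Lemma \ref{AAAA} and Proposition \ref{pro}), so the cost of those identities is amortized.
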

\begin{proof}
	From \eqref{S''c} and \eqref{Elliptic-comp}, we have
	\begin{align*}
	S''_{ c}(\phi_{c})\phi_{c}
	=&-\partial_x^2\phi_{c}+\frac{c^2}{4}\phi_{c}+ic\partial_x\phi_{c}-i\sigma|\phi_{c}|^{2\sigma-2}|\phi_{c}|^2\,\partial_x\phi_{c}
	-i\sigma|\phi_{c}|^{2\sigma-2}|\phi_{c}|^2\partial_x\phi_{c}-i|\phi_{c}|^{2\sigma}\partial_x\phi_{c}\\
	=&-\partial_x^2\phi_{c}-(2\sigma+1)i|\phi_{c}|^{2\sigma}\partial_x\phi_{c}+\omega\phi_{c}+ic\partial_x\phi_{c}\\
	=&-2\sigma i |\phi_{c}|^{2\sigma}\partial_x\phi_{c}.
	\end{align*}
	Hence, we obtain
	\begin{align*}
	\langle S''_c(\phi_c)\phi_c,\phi_c\rangle=( -2\sigma i|\phi_c|^{2\sigma}\partial_x\phi_c,\phi_c)
	=-2\sigma\mbox{Im}\int_{\mathbb{R}}|\phi_c|^{2\sigma}\phi_c\,\overline{\partial_x\phi_c}\,dx.
	\end{align*}
	Taking product with $\overline{x\partial_x\phi_c}$ and $\overline{\phi_c}$ in \eqref{Elliptic-comp} respectively, and integrating, we obtain
	\begin{align}\label{22}
	\|\partial_x\phi_c\|_{L^2}^2=\frac{c^2}{4}\|\phi_c\|_{L^2}^2,
	\end{align}
	and
	$$\|\partial_x\phi_c\|_{L^2}^2+\frac{c^2}{4}\|\phi_c\|_{L^2}^2+c\mbox{Im}\int_\R \phi_c\,\overline{\partial_x\phi_c}\,dx-\mbox{Im}\int_\R |\phi_c|^{2\sigma}\phi_c\,\overline{\partial_x\phi_c}\,dx=0.$$
	We collect the above computations and obtain 
	\begin{align}\label{18}
	\mbox{Im}\int_{\mathbb{R}}|\phi_c|^{2\sigma}\phi_c\,\overline{\partial_x\phi_c}\,dx=c^2M(\phi_c)+2cP(\phi_c).
	\end{align}
	Thus, by \eqref{PM}, \eqref{18} and $\sigma\in(1,2)$, we have
	\begin{align*}
	\langle S''_c(\phi_c)\phi_c,\phi_c\rangle
	=&-2\sigma\big[c^2M(\phi_c)+c^2(1-\sigma)M(\phi_c)\big]\\
	=&-2\sigma(2-\sigma)c^2M(\phi_c)<0.
	\end{align*}
	This completes the proof.
\end{proof}

\subsection{Variational characterization}
Next, we consider the following standard minimization problem:
\begin{align}\label{min}
\mu(c)=\inf\{S_c(u):u\in\H1\setminus\{0\},K_c(u)=0\}.
\end{align}
Let $\mathscr{M}_c$ be the set of all minimizations for \eqref{min}, i.e.
$$
\mathscr{M}_c=\{\phi \in \H1\setminus \{0\}:S_c(\phi)=\mu(c),K_c(\phi)=0\}.
$$
Let $\mathscr{G}_c$ be the set of all critical points of $S_c$, then
$$
\mathscr{G}_c=\{\phi\in\H1\setminus\{0\}:S'_c(\phi)=0\}.
$$

The main result of this subsection is following. Since it can be proved by the standard variational argument (see for examples \cite{CoOh-06-DNLS, Soonsik-W-2014, LiSiSu1}, in particular, see \cite{Soonsik-W-2014} for the ``zero mass" case), we omit the details of the proof here.
\begin{lem}\label{relation}
	$\mathscr{G}_c=\{e^{i\theta}\phi_c(\cdot-y):(\theta,y)\in\mathbb{R}^2\}$, and $\mathscr{M}_c=\mathscr{G}_c$. In particular, if $v\in \H1$ satisfies $K_c(v)=0$ and $ v\neq 0$, then $S_c(\phi_c)\leq S_c(v)$.
\end{lem}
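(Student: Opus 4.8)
The plan is to prove the lemma by the standard constrained-minimization scheme, the only genuinely delicate point being compactness in the zero-mass regime. First I would record the scaling structure of the two functionals. Writing out $S_c(\lambda u)$ and separating the quadratic part from the nonlinear part according to their homogeneity degrees $2$ and $2\sigma+2$, one gets
\[
S_c(\lambda u)=\lambda^2 a(u)-\lambda^{2\sigma+2}b(u),
\]
where $a(u)=\tfrac12\|\partial_x u\|_{L^2}^2+cP(u)+\tfrac{c^2}{4}M(u)$ and $b(u)=\tfrac{1}{2(\sigma+1)}\mathrm{Im}\int_\R|u|^{2\sigma}u\,\overline{\partial_x u}\,dx$. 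The crucial observation is that, by Plancherel, $a(u)=\tfrac12\int_\R(\xi-\tfrac{c}{2})^2|\widehat u(\xi)|^2\,d\xi$, so $a(u)>0$ for every $u\neq0$. Since $K_c(u)=\tfrac{d}{d\lambda}\big|_{\lambda=1}S_c(\lambda u)=2a(u)-(2\sigma+2)b(u)$, the constraint $K_c(u)=0$ is exactly $a(u)=(\sigma+1)b(u)$, which forces $b(u)>0$ and gives $S_c(u)=\tfrac{\sigma}{\sigma+1}a(u)$ on the constraint set.

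Second, I would turn \eqref{min} into a scale-invariant problem. For any $u$ with $b(u)>0$ there is a unique dilation $\lambda(u)>0$ with $K_c(\lambda(u)u)=0$, and substituting back gives $S_c(\lambda(u)u)=c_\sigma\,a(u)^{1+1/\sigma}b(u)^{-1/\sigma}$ with $c_\sigma=\sigma(\sigma+1)^{-1-1/\sigma}$; hence $\mu(c)$ is attained precisely when the scale-invariant ratio $a(u)^{\sigma+1}/b(u)$ is minimized. Existence of a minimizer $u^\ast$ is the heart of the argument and the step I expect to be the main obstacle: because $a$ controls only $\int_\R(\xi-\tfrac{c}{2})^2|\widehat u|^2$ and carries no coercive $L^2$ term (the zero-mass feature of Remark \ref{rem:1}), minimizing sequences need not be bounded in $\H1$ and the usual weak-compactness cannot be applied directly. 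I would handle this exactly as in the zero-mass treatment of Kwon and Wu \cite{Soonsik-W-2014}: pass through the gauge transform that converts $a$ into a pure $\dot H^1$ energy and $b$ into a gradient nonlinearity, restrict to nonnegative symmetric-decreasing profiles by rearrangement, and use the resulting sharp Gagliardo--Nirenberg-type inequality together with the scaling and translation invariances to extract a nonvanishing limit that realizes the infimum.

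Third, a Lagrange-multiplier computation identifies the minimizers with the critical points. If $u^\ast$ minimizes, then $S_c'(u^\ast)=\nu K_c'(u^\ast)$ for some $\nu\in\R$; pairing with $u^\ast$ and using $K_c(u^\ast)=0$ together with $\langle K_c'(u^\ast),u^\ast\rangle=4a(u^\ast)-(2\sigma+2)^2b(u^\ast)=-4\sigma(\sigma+1)b(u^\ast)<0$ (again read off from the scaling structure and the constraint) forces $\nu=0$, so $S_c'(u^\ast)=0$ and $\mathscr{M}_c\subseteq\mathscr{G}_c$. To describe $\mathscr{G}_c$ itself I would apply the gauge transform $\phi=e^{i(\frac{c}{2}x-\frac{1}{2(\sigma+1)}\int_{-\infty}^x\varphi^{2\sigma})}\varphi$ to reduce $S_c'(\phi)=0$, i.e.\ \eqref{Elliptic-comp}, to the real ODE \eqref{Elliptic} for $\varphi$, whose only finite-energy solution up to translation is the explicit profile \eqref{0.2}; this gives $\mathscr{G}_c=\{e^{i\theta}\phi_c(\cdot-y):(\theta,y)\in\R^2\}$.

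Finally, I would assemble the pieces. Since $S_c'(\phi_c)=0$ yields $K_c(\phi_c)=0$, $\phi_c$ lies in the constraint set and $S_c(\phi_c)\ge\mu(c)$; the minimizer $u^\ast$ belongs to $\mathscr{G}_c$, hence equals some $e^{i\theta}\phi_c(\cdot-y)$, and the phase and translation invariance of $S_c$ give $\mu(c)=S_c(u^\ast)=S_c(\phi_c)$. Thus $\phi_c\in\mathscr{M}_c$, and combined with $\mathscr{M}_c\subseteq\mathscr{G}_c$ and invariance we conclude $\mathscr{M}_c=\mathscr{G}_c=\{e^{i\theta}\phi_c(\cdot-y)\}$. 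The displayed consequence is then immediate: any $v\neq0$ with $K_c(v)=0$ satisfies $S_c(v)\ge\mu(c)=S_c(\phi_c)$.
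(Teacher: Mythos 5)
Your proposal is correct and follows essentially the same route as the paper: the paper omits the proof precisely because it is the standard variational argument (scaling reduction of the constrained problem, Lagrange multiplier killed by $\langle K_c'(u^\ast),u^\ast\rangle<0$, identification of critical points via the gauge transform to \eqref{Elliptic}), citing \cite{CoOh-06-DNLS,LiSiSu1} and, for the zero-mass compactness you correctly flag as the delicate step, \cite{Soonsik-W-2014}. Your computations ($a(u)=\tfrac12\int(\xi-\tfrac c2)^2|\widehat u|^2\,d\xi>0$, $K_c=2a-(2\sigma+2)b$, $S_c=\tfrac{\sigma}{\sigma+1}a$ on the constraint, $\langle K_c'(u^\ast),u^\ast\rangle=-4\sigma(\sigma+1)b(u^\ast)$) all check out.
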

%\begin{proof}
%By the Lagrangian multiplier, there exists $\lambda$, such that for any $\phi\in\mathscr{M}_c$, we obtain
%$$S'_c(\phi)=\lambda K'_c(\phi),$$
%from \eqref{S'c},we write
%$$S'_c(\phi)=-\partial_x^2\phi-i|u|^{2\sigma}\partial_x \phi+ci\partial_x\phi+\frac{c^2}{4}\phi.$$
%Choosing a function {\color{red} .......}
%%See the proofs of Lemma 2 and Lemma 10 of \cite{CoOh-06-DNLS}, here we omit the details.
%\end{proof}

\section{Negative direction and modulation} \label{sec:3}

For $R>0$, let $\chi_R(x)=\chi(\frac{x}R)$,
where $\chi\in C^{\infty}(\mathbb{R})$, such that $\chi(x)=1$ when $|x|\le 1$; $\chi(x)=0$ when $|x|\ge 2$.
Because $\partial_c\phi_c$ does not belong to $L^2(\mathbb{R})$, the localization technique is employed here,
as will be seen in the proof of the following lemma. 
% Throughout this section, $\sigma\in(1,2)$. And we will use some estimates which are shown in Appendix.

\begin{prop}\label{ND}
	There exist $\mu$, $\nu$ and $R$
	such that for the function $ \psi=\phi_c+\mu\chi_R\partial_c\phi_c+\nu i\partial_x\phi_c$,
	the following properties hold:
	\begin{itemize}
		\item[(1)]$ \psi\in H^1(\R)$;
		\item[(2)]$\langle P'(\phi_c),\psi\rangle=\langle M'(\phi_c),\psi\rangle=0$;
		\item[(3)]$\langle S''_c(\phi_c)\psi,\psi\rangle <0$.
	\end{itemize}
\end{prop}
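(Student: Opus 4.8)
The plan is to handle the three assertions in order: (1) and (2) are elementary once the relevant pairings are recorded, while (3) is the analytic core. Throughout write $\langle\cdot,\cdot\rangle=(\cdot,\cdot)=\mathrm{Re}\int(\cdot)\overline{(\cdot)}$, and recall $M'(\phi_c)=\phi_c$, $P'(\phi_c)=i\partial_x\phi_c$. For (1): since $\phi_c$ solves \eqref{Elliptic-comp} it is smooth, and $\phi_c,\,i\partial_x\phi_c\in H^1(\R)$; the only factor that need not lie in $L^2$ is $\partial_c\phi_c$, because differentiating the phase $e^{i\frac c2x}$ in \eqref{0.1} produces the term $\tfrac{ix}{2}\phi_c\notin L^2$ when $\sigma\in(1,2)$. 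Multiplying by the compactly supported cut-off $\chi_R$ restores $\chi_R\partial_c\phi_c\in H^1(\R)$, so $\psi\in H^1(\R)$ for every finite $R$. This is exactly the purpose of the localization.

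For (2), the two constraints are linear in $(\mu,\nu)$. Using $\langle\phi_c,\phi_c\rangle=2M(\phi_c)$, $\langle\phi_c,i\partial_x\phi_c\rangle=2P(\phi_c)$, and $\|\partial_x\phi_c\|_{L^2}^2=\tfrac{c^2}{2}M(\phi_c)$ from \eqref{22}, the system reads
\begin{align*}
2M(\phi_c)+\mu\,\langle\phi_c,\chi_R\partial_c\phi_c\rangle+2\nu P(\phi_c)&=0,\\
2P(\phi_c)+\mu\,\langle i\partial_x\phi_c,\chi_R\partial_c\phi_c\rangle+\tfrac{c^2}{2}\nu M(\phi_c)&=0.
\end{align*}
The cut-off pairings converge as $R\to\infty$: taking the real part annihilates the non-$L^2$ imaginary part of $\partial_c\phi_c$, leaving integrable densities, so $\langle\phi_c,\chi_R\partial_c\phi_c\rangle\to\partial_cM(\phi_c)$ and $\langle i\partial_x\phi_c,\chi_R\partial_c\phi_c\rangle\to\partial_cP(\phi_c)$. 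By \eqref{PcMc} and \eqref{M'cM} the determinant of the limiting system equals $\tfrac c2(1-\sigma)M(\phi_c)^2\neq0$, so for all large $R$ the system has a unique solution, and $(\mu(R),\nu(R))\to(\mu_\infty,\nu_\infty)$ with $\nu_\infty=\tfrac2c$ and $\mu_\infty=\tfrac{2\sigma(2-\sigma)c}{\sigma-1}$.

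For (3), expand $\langle S_c''(\phi_c)\psi,\psi\rangle$ bilinearly into the six pairings of $\phi_c$, $\chi_R\partial_c\phi_c$, $i\partial_x\phi_c$. By self-adjointness (Lemma \ref{14.49}), every pairing carrying the cut-off in only one slot can be rewritten with $S_c''(\phi_c)$ falling on a slot \emph{without} the cut-off; since $S_c''(\phi_c)\phi_c=-2\sigma i|\phi_c|^{2\sigma}\partial_x\phi_c$ (proof of Lemma \ref{MaxC}) and $S_c''(\phi_c)(i\partial_x\phi_c)$ both decay rapidly, those pairings converge to their formal $R=\infty$ values. The sole dangerous term is the diagonal $\mu^2\langle S_c''(\phi_c)(\chi_R\partial_c\phi_c),\chi_R\partial_c\phi_c\rangle$, where $\partial_c\phi_c\notin L^2$. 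The remedy is to write $S_c''(\phi_c)=L_0+V$ from \eqref{S''c}, with $L_0f=-\partial_x^2f+ci\partial_xf+\tfrac{c^2}4f=(\partial_x-\tfrac{ic}2)^{*}(\partial_x-\tfrac{ic}2)f$ and $V$ a potential with rapidly decaying coefficients, so that $\langle L_0g,g\rangle=\|(\partial_x-\tfrac{ic}2)g\|_{L^2}^2$. The key cancellation is that $(\partial_x-\tfrac{ic}2)\phi_c$ decays fast, whence $(\partial_x-\tfrac{ic}2)\partial_c\phi_c\in L^2$: the growing part $\tfrac{ix}2\phi_c$ of $\partial_c\phi_c$ is killed to leading order by the gauge-covariant derivative. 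Consequently the localization error $\chi_R'\partial_c\phi_c$ has $L^2$-norm tending to $0$, and the diagonal term converges to $\mu_\infty^2\langle S_c''(\phi_c)\partial_c\phi_c,\partial_c\phi_c\rangle$, finite and positive by Lemma \ref{S''-SomeV}.

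Passing to the limit, the relation \eqref{S'M'P'} together with the orthogonality (2) gives $\langle S_c''(\phi_c)\partial_c\phi_c,\psi\rangle\to0$, which removes the positive $\partial_c\phi_c$-diagonal contribution and yields
\begin{align*}
\langle S_c''(\phi_c)\psi,\psi\rangle\ \longrightarrow\ &\langle S_c''(\phi_c)\phi_c,\phi_c\rangle+\mu_\infty\langle S_c''(\phi_c)\phi_c,\partial_c\phi_c\rangle+2\nu_\infty\langle S_c''(\phi_c)\phi_c,i\partial_x\phi_c\rangle\\
&+\mu_\infty\nu_\infty\langle S_c''(\phi_c)\partial_c\phi_c,i\partial_x\phi_c\rangle+\nu_\infty^2\langle S_c''(\phi_c)(i\partial_x\phi_c),i\partial_x\phi_c\rangle.
\end{align*}
The first term is $-2\sigma(2-\sigma)c^2M(\phi_c)<0$ (Lemma \ref{MaxC}); the terms involving $\partial_c\phi_c$ reduce through \eqref{S'M'P'}, \eqref{PM}, \eqref{PcMc} to explicit multiples of $M(\phi_c)$; and the two remaining integrals $\langle S_c''(\phi_c)\phi_c,i\partial_x\phi_c\rangle=-2\sigma\int_\R|\phi_c|^{2\sigma}|\partial_x\phi_c|^2\,dx$ and $\langle S_c''(\phi_c)(i\partial_x\phi_c),i\partial_x\phi_c\rangle$ are evaluated from the explicit profile \eqref{0.2}: the first integral $(\partial_x\varphi_c)^2=\tfrac{c}{2\sigma+2}\varphi_c^{2\sigma+2}-\tfrac1{(2\sigma+2)^2}\varphi_c^{4\sigma+2}$ of \eqref{Elliptic} collapses every such density to powers of $\varphi_c$, i.e.\ to Beta-type constants. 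The limit thus reduces to an explicit scalar inequality in $\sigma$, and by continuity $\langle S_c''(\phi_c)\psi,\psi\rangle<0$ for $R$ large. I expect the main obstacle to be precisely this last step: controlling the diagonal $\partial_c\phi_c$ term via the covariant-derivative cancellation above, and then verifying the final inequality on the \emph{whole} range $\sigma\in(1,2)$ — which is where the choice of the momentum direction $i\partial_x\phi_c$ as the new auxiliary function should improve on the argument of \cite{NOW16} that worked only near $\sigma=1$.
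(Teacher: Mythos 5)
Your construction is in substance the paper's own: the same auxiliary function $\psi$, the same linear system for $(\mu,\nu)$ — and your limiting values $\nu_\infty=\tfrac2c$ and $\mu_\infty=\tfrac{2\sigma(2-\sigma)c}{\sigma-1}$ agree with \eqref{zhi1}--\eqref{nu} after using \eqref{M'cM} — and the same cancellation mechanism for the dangerous diagonal term: your covariant derivative $D=\partial_x-\tfrac{ic}{2}$ is precisely the gauge factorization $\partial_x^2f-ci\partial_xf-\tfrac{c^2}{4}f=e^{\frac c2 ix}\partial_x^2\bigl(e^{-\frac c2 ix}f\bigr)$ by which the paper proves Lemma \ref{lem:S''-Rc} in the Appendix. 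The only bookkeeping difference is that you pass to the limit $R\to\infty$ qualitatively, where the paper keeps quantitative $O(R^{-\frac{2}{\sigma}+1})$ errors (Lemmas A.1--A.3) and then takes $R$ large but fixed; your dominated-convergence claims for the pairings are justified by the same pointwise bounds $|\partial_c\phi_c|\lesssim\langle x\rangle^{1-\frac1\sigma}$, so this is a legitimate variant rather than a defect. Note also that $\langle S''_c(\phi_c)\partial_c\phi_c,\psi\rangle=0$ is \emph{exact} for each $R$, not merely asymptotic, since by \eqref{S'M'P'} it equals $-\tfrac c2\langle M'(\phi_c),\psi\rangle-\langle P'(\phi_c),\psi\rangle$ and $M'(\phi_c)$, $P'(\phi_c)$ are genuine $L^2$ functions.

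The one genuine shortfall is that you stop before verifying negativity, anticipating ``an explicit scalar inequality in $\sigma$'' requiring Beta-type evaluations as the main obstacle. No such inequality arises: every term in your displayed limit is already negative by the lemmas you cite. Apply the exact orthogonality a second time to your residual $\mu_\infty$-terms:
\begin{align*}
\mu_\infty\langle S''_c(\phi_c)\phi_c,\partial_c\phi_c\rangle+\mu_\infty\nu_\infty\langle S''_c(\phi_c)\partial_c\phi_c,i\partial_x\phi_c\rangle
=-\mu_\infty^2\langle S''_c(\phi_c)\partial_c\phi_c,\partial_c\phi_c\rangle<0
\end{align*}
by Lemma \ref{S''-SomeV}. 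The first term is negative by Lemma \ref{MaxC}; and since $\nu_\infty=\tfrac2c>0$, the two remaining terms are negative as well: your own formula $\langle S''_c(\phi_c)\phi_c,i\partial_x\phi_c\rangle=-2\sigma\int_\R|\phi_c|^{2\sigma}|\partial_x\phi_c|^2\,dx<0$ (equivalently $-\tfrac{c^2}{2}\sigma\|\phi_c\|_{L^{2\sigma+2}}^{2\sigma+2}$, via the pointwise identity $|\partial_x\phi_c|^2=\tfrac{c^2}{4}\varphi_c^2$ that follows from the first integral of \eqref{Elliptic}), and $\langle S''_c(\phi_c)i\partial_x\phi_c,i\partial_x\phi_c\rangle=-\tfrac{c^4}{2}\sigma(2-\sigma)M(\phi_c)<0$ — exactly the paper's Lemma \ref{AAAA}. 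Hence the limiting value is strictly negative uniformly on $\sigma\in(1,2)$ with no case analysis, and choosing $R$ large closes part (3); the near-$\sigma=1$ restriction of \cite{NOW16} is not hiding in this final step, but in the choice of auxiliary function, which you (and the paper) have already made correctly.
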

\begin{proof}
	(1) Since $\phi_c\in H^1(\R)$ and $\partial_x\phi_c\in H^1(\R)$, we just need to verify that $\chi_R\partial_c\phi_c\in H^1(\R)$.
	From \eqref{0.1}, we have
	\begin{align}\label{10.18}
	\partial_c\phi_c=e^{i\frac{c}{2}x-\frac{i}{2(\sigma+1)}\int_{-\infty}^{x}{\varphi_c(y)^{2\sigma}dy}}\Big(\frac i2 x\varphi_c-\frac {i\sigma}{\sigma+1}\varphi_c\int_{-\infty}^x\partial_c\varphi_c\varphi_c^{2\sigma-1}dy+\partial_c\varphi_c\Big).
	\end{align}
	By \eqref{10.21}(see Appendix), we know that 
	\begin{align*}
	%&|\varphi_c|\lesssim \langle x\rangle^{-\frac{1}{\sigma}}, \quad\mbox{and}\quad  |\partial_x\varphi_c|\lesssim \langle x\rangle^{-1-\frac{1}{\sigma}},\\
	&|\partial_c\phi_c|\lesssim \langle x\rangle^{1-\frac{1}{\sigma}},\quad\mbox{and}\quad  |\partial_x\partial_c\phi_c|\lesssim \langle x\rangle^{1-\frac{1}{\sigma}}.
	\end{align*}
	Since $\chi_R(x)$ is smooth cutoff function, we have   $\chi_R\partial_c\phi_c\in H^1(\R)$.
	
	(2) It is sufficient to find $\mu$, $\nu$ such that
	\begin{equation*}
	\left\{ \aligned
	&\langle P'(\phi_c),\phi_c+\mu\chi_R\partial_c\phi_c+\nu i\partial_x\phi_c\rangle=0,\\
	&\langle M'(\phi_c),\phi_c+\mu\chi_R\partial_c\phi_c+\nu i\partial_x\phi_c\rangle=0.
	\endaligned
	\right.
	\end{equation*}
	Together with \eqref{P'} and \eqref{M'}, we obtain
	\begin{align}
	\mu = -\frac{4P(\phi_c)^2-2M(\phi_c)\|\partial_x\phi_c\|_{L^2}^2}
	{2P(\phi_c)\cdot\frac{1}{2}\partial_c\mbox{Im}\int_{\R}\chi_R\phi_c\overline{\partial_x\phi_c}dx
		-\|\partial_x\phi_c\|_{L^2}^2\cdot\frac{1}{2}\partial_c\int_{\R}\chi_R|\phi_c|^2dx},\label{zhi1}
	\end{align}
	and
	\begin{align}
	\nu= \frac{2P(\phi_c)\cdot\frac{1}{2}\partial_c\int_{\R}\chi_R|\phi_c|^2dx
		-2M(\phi_c)\cdot\frac{1}{2}\partial_c\mbox{Im}\int_{\R}\chi_R\phi_c\overline{\partial_x\phi_c}dx}
	{2P(\phi_c)\cdot\frac{1}{2}\partial_c\mbox{Im}\int_{\R}\chi_R\phi_c\overline{\partial_x\phi_c}dx
		-\|\partial_x\phi_c\|_{L^2}^2\cdot\frac{1}{2}\partial_x\int_{\R}\chi_R|\phi_c|^2dx}.\label{zhi2}
	\end{align}
	Inserting \eqref{PM}, \eqref{PcMc}, \eqref{22} into \eqref{zhi1} and \eqref{zhi2} and using Lemmas A.1 and A.2 yields 
	%\begin{equation*}%\label{mu-nu}
	%\left\{ \aligned
	%&\mu=\frac{-2(2-\sigma)M(\phi_c)}{\partial_c M(\phi_c)+O(R^{-\frac{2}{\sigma}+1})},\\
	%&\nu=\frac{2}{c}+O(R^{-\frac{2}{\sigma}+1}).
	%\endaligned
	%\right.
	%\end{equation*}
	\begin{align*}
	\mu=\frac{-2(2-\sigma)M(\phi_c)}{\partial_c M(\phi_c)+O(R^{-\frac{2}{\sigma}+1})},
	\end{align*}
	and
	\begin{align}\label{nu}
	\nu=\frac{2}{c}+O(R^{-\frac{2}{\sigma}+1}).
	\end{align}
	
	(3) According to Lemma \ref{14.49} and the selection of $\psi$, we have
	\begin{align*}
	\langle S''_c(\phi_c)\phi_c,\phi_c\rangle
	=\langle S''_c(\phi_c)(\psi-\mu\chi_R\partial_c\phi_c-\nu i\partial_x\phi_c),\psi-\mu\chi_R\partial_c\phi_c-\nu i\partial_x\phi_c \rangle.
	\end{align*}
	By the self-adjoint of $S''_c(\phi_c)$ and a direct expansion, we obtain
	\begin{align}
	\langle S''_c(\phi_c)\phi_c,\phi_c\rangle
	&=\langle S''_c(\phi_c)\psi,\psi\rangle-2\mu\langle S''_c(\phi_c)\chi_R\partial_c\phi_c,\psi\rangle
	-2\nu \langle S''_c(\phi_c)\psi,i\partial_x\phi_c\rangle\notag\\
	+&\mu^2\langle S''_c(\phi_c)\chi_R\partial_c\phi_c,\chi_R\partial_c\phi_c\rangle+2\mu\nu\langle S''_c(\phi_c)\chi_R\partial_c\phi_c,i\partial_x\phi_c\rangle\notag\\
	+&\nu^2\langle S''_c(\phi_c)i\partial_x\phi_c,i\partial_x\phi_c\rangle.\label{15.55}
	\end{align}
	Using $\psi=\phi_c+\mu\chi_R\partial_c\phi_c+\nu i\partial_x\phi_c$, we have
	\begin{align}
	\langle S''_c(\phi_c)\psi,i\partial_x\phi_c\rangle=&\langle S''_c(\phi_c)(\phi_c+\mu\chi_R\partial_c\phi_c+\nu i\partial_x\phi_c),i\partial_x\phi_c\rangle\notag\\
	=&\langle S''_c(\phi_c)\phi_c,i\partial_x\phi_c\rangle+\mu\langle S''_c(\phi_c)\chi_R\partial_c\phi_c,i\partial_x\phi_c\rangle\notag\\
	&+\nu \langle S''_c(\phi_c)i\partial_x\phi_c,i\partial_x\phi_c\rangle.\label{15.54}
	\end{align}
	Together with \eqref{15.55} and \eqref{15.54}, we get
	\begin{align}\label{16}
	\langle S''_c(\phi_c)\phi_c,\phi_c\rangle=&\langle S''_c(\phi_c)\psi,\psi\rangle-2\mu\langle S''_c(\phi_c)\chi_R\partial_c\phi_c,\psi\rangle-2\nu\langle S''_c(\phi_c)\phi_c,i\partial_x\phi_c\rangle\notag\\
	&+\mu^2\langle S''_c(\phi_c)\chi_R\partial_c\phi_c,\chi_R\partial_c\phi_c\rangle
	-\nu^2\langle S''_c(\phi_c)i\partial_x\phi_c,i\partial_x\phi_c\rangle.
	\end{align}
	Combining with \eqref{S'M'P'} and the conclusion (2), we have
	$$
	\langle S''_c(\phi_c)\partial_c\phi_c,\psi\rangle=0.
	$$
	Then, we know
	\begin{align}\label{3.18}
	\langle S''_c(\phi_c)\chi_R\partial_c\phi_c,\psi\rangle
	=& -\langle S''_c(\phi_c)(1-\chi_R)\partial_c\phi_c,\psi\rangle.
	\end{align}
	%By using Lemma \ref{PMV}, we can get
	%\begin{align}
	%\langle S''_c(\phi_c)\partial_c\phi_c,\partial_c\phi_c\rangle&>0. \label{15.53}
	%\end{align}
	Inserting \eqref{3.18} into \eqref{16} yields
	\begin{align}\label{17}
	\langle S''_c(\phi_c)\phi_c,\phi_c\rangle
	=&\langle S''_c(\phi_c)\psi,\psi\rangle+2\mu\langle S''_c(\phi_c)(1-\chi_R)\partial_c\phi_c,\psi\rangle-2\nu\langle S''_c(\phi_c)\phi_c,i\partial_x\phi_c\rangle\notag\\
	&-\nu^2\langle S''_c(\phi_c)i\partial_x\phi_c,i\partial_x\phi_c\rangle
	+\mu^2\langle S''_c(\phi_c)(1-\chi_R)\partial_c\phi_c,(1-\chi_R)\partial_c\phi_c\rangle\notag\\
	&-2\mu^2\langle S''_c(\phi_c)(1-\chi_R)\partial_c\phi_c,\partial_c\phi_c\rangle+\mu^2\langle S''_c(\phi_c)\partial_c\phi_c,\partial_c\phi_c\rangle.
	\end{align}
	From Lemma A.3, we have
	\begin{align}\label{1}
	|\langle S''_c(\phi_c)(1-\chi_R)\partial_c\phi_c,\partial_c\phi_c\rangle|
	\lesssim &\int\left(1-\chi_{\frac{R}{2}}\right)\langle x\rangle^{-1-\frac{1}{\sigma}}\langle x\rangle^{-1-\frac{1}{\sigma}}dx\notag\\
	=&O(R^{-\frac{2}{\sigma}+1}),
	\end{align}
	and
	\begin{align}\label{2}
	|\langle S''_c(\phi_c)(1-\chi_R)\partial_c\phi_c,(1-\chi_R)\partial_c\phi_c\rangle|=O(R^{-\frac{2}{\sigma}+1}).
	\end{align}
	Note that $|\psi|\lesssim \langle x\rangle^{1-\frac{1}{\sigma}}$, we get
	\begin{align}
	|\langle S''_c(\phi_c)(1-\chi_R)\partial_c\phi_c,\psi\rangle|
	&\lesssim \int\left(1-\chi_{\frac{R}{2}}\right)\langle x\rangle^{-1-\frac{1}{\sigma}}|\psi|dx\notag\\
	&=O(R^{-\frac{2}{\sigma}+1}).\label{3}
	\end{align}
	Hence, inserting the estimates in \eqref{1}--\eqref{3} into \eqref{17}, and using \eqref{nu}, we get
	\begin{align}\label{17.1}
	\langle S''_c(\phi_c)\phi_c,\phi_c\rangle
	=&\langle S''_c(\phi_c)\psi,\psi\rangle-\frac{4}{c}\langle S''_c(\phi_c)\phi_c,i\partial_x\phi_c\rangle-\nu^2\langle S''_c(\phi_c)i\partial_x\phi_c,i\partial_x\phi_c\rangle\notag\\
	&
	\quad +\mu^2\langle S''_c(\phi_c)\partial_c\phi_c,\partial_c\phi_c\rangle+O(R^{-\frac{2}{\sigma}+1}).
	\end{align}
	Now we need the following lemma.
	\begin{lem}\label{AAAA}
		It holds that
		\begin{align*}
	\langle S''_c(\phi_c)i\partial_x\phi_c,i\partial_x\phi_c\rangle<0,\quad \quad\mbox{and }
	\langle S''_c(\phi_c)i\partial_x\phi_c,\phi_c\rangle<0.
		\end{align*}
	\end{lem}
	\begin{proof}
		From \eqref{Elliptic-comp} and \eqref{S''c}, we have
		$$ S''_c(\phi_c)i\partial_x\phi_c=-\frac{c^2}{2}\sigma|\phi_c|^{2\sigma}\phi_c.$$
		Therefore,
		\begin{align*}
		\langle S''_c(\phi_c)i\partial_x\phi_c,i\partial_x\phi_c\rangle=&-\frac{c^2}{2}\sigma( |\phi_c|^{2\sigma}\phi_c,i\partial_x\phi_c)\\
		=&-\frac{c^2}{2}\sigma\mbox{Im}\int_\mathbb{R}|\phi_c|^{2\sigma}\phi_c\overline{\partial_x\phi_c}dx.
		\end{align*}
		From \eqref{18} and \eqref{PM}, we get
		\begin{align*}%\label{4}
		\langle S''_c(\phi_c)i\partial_x\phi_c,i\partial_x\phi_c\rangle=&-\frac{c^2}{2}\sigma\left[c^2M(\phi_c)+2cP(\phi_c)\right]\notag\\
		=&-\frac{c^4}{2}(2-\sigma)\sigma M(\phi_c)<0.
		\end{align*}
		Similarly, we have
		\begin{align*}
		\langle S''_c(\phi_c)i\partial_x\phi_c,\phi_c\rangle
		%=&-\frac{c^2}{2}\sigma(|\phi_c|^{2\sigma}\phi_c,\phi_c)\\
		=&-\frac{c^2}{2}\sigma\|\phi_c\|_{L^{2\sigma+2}}^{2\sigma+2}.
		\end{align*}
		From \eqref{phiM} ,  we obtain
		\begin{align*}
		\langle S''_c(\phi_c)i\partial_x\phi_c,\phi_c\rangle =-(\sigma+1)(2-\sigma)\sigma c^3M(\phi_c)<0.
		\end{align*}
		This proves the lemma.
	\end{proof}
	
	Combining with \eqref{17.1} and Lemma \ref{AAAA}, we have
	\begin{align*}
	\langle S''_c(\phi_c)\psi,\psi\rangle
	=&\langle S''_c(\phi_c)\phi_c,\phi_c\rangle+\frac{4}{c}\langle S''_c(\phi_c)\phi_c,i\partial_x\phi_c\rangle+\nu^2\langle S''_c(\phi_c)i\partial_x\phi_c,i\partial_x\phi_c\rangle\notag\\
	&-\mu^2\langle S''_c(\phi_c)\partial_c\phi_c,\partial_c\phi_c\rangle+O(R^{-\frac{2}{\sigma}+1})\\
	<&\langle S''_c(\phi_c)\phi_c,\phi_c\rangle-\mu^2\langle S''_c(\phi_c)\partial_c\phi_c,\partial_c\phi_c\rangle+O(R^{-\frac{2}{\sigma}+1}).
	\end{align*}
	From Lemma \ref{S''-SomeV} and Lemma \ref{MaxC}, we note that the first and the second terms in the right-hand side are negative. Hence, choosing $R$ large enough, we obtain
	$$\langle S''_c(\phi_c)\psi,\psi\rangle<\langle S''_c(\phi_c)\phi_c,\phi_c\rangle<0.$$
	This concludes the proof of Proposition \ref{ND}.
\end{proof}

\begin{lem}\label{cor: Sc}
	There exists a constant $\beta_0>0$ such that
	$$S_c(\phi_c+\beta\psi)<S_c(\phi_c),$$
	for all $\beta\in (-\beta_0,0)\cup(0,\beta_0).$
\end{lem}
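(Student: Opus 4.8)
The goal is to show that $\phi_c$ is a strict local maximum of $S_c$ along the one-parameter family $\beta \mapsto \phi_c + \beta\psi$, for $\beta$ in a punctured neighborhood of the origin. The natural tool is a second-order Taylor expansion of $S_c$ at the critical point $\phi_c$, exploiting that $\psi$ is precisely the negative direction constructed in Proposition \ref{ND}.

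\begin{proof}
Consider the scalar function $g(\beta) = S_c(\phi_c + \beta\psi)$. Since $\psi \in H^1(\R)$ by part (1) of Proposition \ref{ND}, and $S_c$ is a smooth functional on $H^1(\R)$ (it is built from $E$, $P$, $M$, each at most of polynomial growth in $u$ with $\sigma > 1$ guaranteeing sufficient regularity of the nonlinearity), the map $g$ is twice continuously differentiable near $\beta = 0$. The plan is to Taylor expand:
\begin{align}\label{eq:taylor-Sc}
S_c(\phi_c + \beta\psi) = S_c(\phi_c) + \beta\,\langle S'_c(\phi_c),\psi\rangle + \frac{\beta^2}{2}\,\langle S''_c(\phi_c)\psi,\psi\rangle + o(\beta^2).
\end{align}
Here the first-order term vanishes because $\phi_c$ solves the elliptic equation \eqref{Elliptic-comp}, i.e. $S'_c(\phi_c) = 0$. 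For the quadratic term, part (3) of Proposition \ref{ND} gives $\langle S''_c(\phi_c)\psi,\psi\rangle < 0$. Set $\alpha := -\tfrac{1}{2}\langle S''_c(\phi_c)\psi,\psi\rangle > 0$, so that \eqref{eq:taylor-Sc} reads $S_c(\phi_c + \beta\psi) - S_c(\phi_c) = -\alpha\beta^2 + o(\beta^2)$.
\end{proof}

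The remaining work is to control the remainder term uniformly in $\beta$. Because $o(\beta^2)/\beta^2 \to 0$ as $\beta \to 0$, there is $\beta_0 > 0$ such that $|o(\beta^2)| \le \tfrac{\alpha}{2}\beta^2$ for all $|\beta| < \beta_0$; for such $\beta \ne 0$ one gets $S_c(\phi_c + \beta\psi) - S_c(\phi_c) \le -\tfrac{\alpha}{2}\beta^2 < 0$, which is exactly the claimed strict inequality. The only genuine subtlety, and the step I expect to require the most care, is justifying the twice-differentiability of $g$ together with the Peano form of the remainder in \eqref{eq:taylor-Sc}: one must check that the nonlinear term $\mathrm{Im}\int |u|^{2\sigma}u\,\overline{\partial_x u}\,dx$ is $C^2$ along the segment $\phi_c + \beta\psi$. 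This is where $\sigma > 1$ is used, since it ensures $|u|^{2\sigma}$ is twice differentiable in $u$ with locally bounded second derivative, and the pointwise bounds $|\psi| \lesssim \langle x\rangle^{1-1/\sigma}$ (noted in the proof of Proposition \ref{ND}) together with the decay of $\phi_c$ and $\partial_x\phi_c$ keep all arising integrals finite. Once this regularity is in hand the conclusion is immediate, so I would state the $C^2$ expansion as the principal lemma and reduce the rest to the elementary one-variable estimate above.
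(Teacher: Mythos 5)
Your proof is correct and follows essentially the same route as the paper: a second-order Taylor expansion of $S_c$ at $\phi_c$, using $S'_c(\phi_c)=0$ to kill the linear term and $\langle S''_c(\phi_c)\psi,\psi\rangle<0$ from Proposition \ref{ND} to absorb the $o(\beta^2)$ remainder for small $\beta$. Your extra attention to the $C^2$ regularity of the nonlinear functional (using $\sigma>1$) makes explicit a point the paper leaves implicit, but it is the identical argument; as a minor note, the remainder estimate belongs inside the \verb|proof| environment rather than after it.
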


\begin{proof}
	By Taylor's expansion, for $\beta\in\R$, we have
	\begin{align*}
	S_c(\phi_c+\beta\psi)=&S_c(\phi_c)+\beta\langle S'_c(\phi_c),\psi\rangle
	+\frac{1}{2}\beta^2\langle S''_c(\phi_c)\psi,\psi\rangle+o(\beta^2)\\
	=&S_c(\phi_c)+\frac{1}{2}\beta^2\langle S''_c(\phi_c)\psi,\psi\rangle+o(\beta^2).
	\end{align*}
	Since $\langle S''_c(\phi_c)\psi,\psi\rangle <0$, there exists a constant $\beta_0>0$, such that
	for any $\beta\in (-\beta_0,0)\cup(0,\beta_0)$, we have
	$$S_c(\phi_c+\beta\psi)<S_c(\phi_c).$$
	This finishes the proof.
\end{proof}

We denote $\T=\R/2\pi\Z$. Then we can get the following proposition.

\begin{prop}\label{pro}
	Suppose $u\in U_{\varepsilon_0}(\phi_c)$, then exist $\theta=\theta(u)$, $y=y(u)$, such that
	
	{\rm(1)} $\langle u,ie^{i\theta}\phi_c(\cdot-y)\rangle=0,\ \langle u,e^{i\theta}\partial_x\phi_c(\cdot-y)\rangle=0$;
	
	{\rm(2)} $\|\partial_u\theta\|_{\H1}\leq C$ and $\|\partial_u y\|_{\H1}\leq C$ for any $u\in U_{\varepsilon_0}(\phi_c)$;   
	
	{\rm(3)} $\theta(e^{i\theta_0}u(\cdot-y_0))=\theta+\theta_0$, $y(e^{i\theta_0}u(\cdot-y_0))=y+y_0$ for any $u\in U_{\varepsilon_0}(\phi_c)$ and $\theta_0\in\T$, $y_0\in\R$.
	
\end{prop}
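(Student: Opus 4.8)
The plan is to prove Proposition~\ref{pro} by a standard modulation argument built on the implicit function theorem, using the two symmetry directions $ie^{i\theta}\phi_c(\cdot-y)$ (phase) and $e^{i\theta}\partial_x\phi_c(\cdot-y)$ (translation) as the orthogonality directions. These directions remain available in the endpoint case because $\phi_c\in H^1(\R)$; they are the natural substitute for the parameter derivatives $\partial_\omega\phi$, $\partial_c\phi$, which, as emphasized in the introduction, are unavailable here.

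First I would define $F:H^1(\R)\times\T\times\R\to\R^2$ by
\begin{align*}
F(u,\theta,y)=\big(\langle u,ie^{i\theta}\phi_c(\cdot-y)\rangle,\ \langle u,e^{i\theta}\partial_x\phi_c(\cdot-y)\rangle\big).
\end{align*}
At the base point $(u,\theta,y)=(\phi_c,0,0)$ both components vanish: the first because $\langle\phi_c,i\phi_c\rangle=0$ for the real pairing, and the second because $\mathrm{Re}\int\phi_c\overline{\partial_x\phi_c}\,dx=\tfrac12\int\partial_x|\phi_c|^2\,dx=0$. Hence $F(\phi_c,0,0)=0$, and it remains to invert the Jacobian $\partial_{(\theta,y)}F$ at the base point. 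Differentiating each entry and evaluating there gives
\begin{align*}
\partial_{(\theta,y)}F(\phi_c,0,0)=\begin{pmatrix}-2M(\phi_c)&-2P(\phi_c)\\ 2P(\phi_c)&\|\partial_x\phi_c\|_{L^2}^2\end{pmatrix},
\end{align*}
where I use $\langle\phi_c,i\partial_x\phi_c\rangle=\mathrm{Im}\int\phi_c\overline{\partial_x\phi_c}\,dx=2P(\phi_c)$. Its determinant is $4P(\phi_c)^2-2M(\phi_c)\|\partial_x\phi_c\|_{L^2}^2$; inserting \eqref{22} (so $\|\partial_x\phi_c\|_{L^2}^2=\tfrac{c^2}{2}M(\phi_c)$) together with \eqref{PM} (so $P(\phi_c)=\tfrac c2(1-\sigma)M(\phi_c)$) yields
\begin{align*}
\det\partial_{(\theta,y)}F(\phi_c,0,0)=c^2M(\phi_c)^2\big[(1-\sigma)^2-1\big]=c^2M(\phi_c)^2\,\sigma(\sigma-2).
\end{align*}
For $\sigma\in(1,2)$ this is strictly negative, hence nonzero (and $M(\phi_c)$ is finite precisely because $\sigma<2$). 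The implicit function theorem then produces $\varepsilon_0>0$ and $C^1$ maps $\theta(u),y(u)$ on $U_{\varepsilon_0}(\phi_c)$ solving $F(u,\theta(u),y(u))=0$, which is conclusion (1).

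Conclusion (2) follows from the implicit function theorem formula $\partial_u(\theta,y)=-[\partial_{(\theta,y)}F]^{-1}\partial_uF$: the representatives $ie^{i\theta}\phi_c(\cdot-y)$ and $e^{i\theta}\partial_x\phi_c(\cdot-y)$ have $H^1$-norms independent of $\theta,y$, and the inverse Jacobian is uniformly bounded after shrinking $\varepsilon_0$ (by continuity of the determinant), giving $\|\partial_u\theta\|_{H^1},\|\partial_uy\|_{H^1}\le C$. Conclusion (3) is an equivariance statement that I would deduce from the symmetries together with local uniqueness: since the real pairing is invariant under simultaneous phase rotation and under translation, a direct change of variables shows that $(\theta(u)+\theta_0,\ y(u)+y_0)$ satisfies the two orthogonality relations for $v:=e^{i\theta_0}u(\cdot-y_0)$, and by the local uniqueness in the implicit function theorem these must coincide with $(\theta(v),y(v))$, with $\theta$ read modulo $2\pi$ on $\T$. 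The main (and essentially the only nonroutine) obstacle is the nondegeneracy of the Jacobian: this is exactly where the endpoint identities \eqref{22} and \eqref{PM} and the restriction $\sigma\in(1,2)$ enter, ensuring both $\sigma(\sigma-2)\neq0$ and finite mass; the rest is the standard implicit-function-theorem machinery.
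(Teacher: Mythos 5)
Your proposal is correct and follows essentially the same route as the paper: the implicit function theorem applied to the pair $F=(\langle u,ie^{i\theta}\phi_c(\cdot-y)\rangle,\langle u,e^{i\theta}\partial_x\phi_c(\cdot-y)\rangle)$, with the Jacobian nondegeneracy reduced via \eqref{22} and \eqref{PM} to $\det=c^2M(\phi_c)^2\sigma(\sigma-2)\neq0$, which matches the paper's $-\sigma(2-\sigma)c^2M^2(\phi_c)$ exactly. Your treatment of (2) via the implicit-function differentiation formula and of (3) via symmetry plus local uniqueness is precisely how the paper disposes of those parts as well.
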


\begin{proof}
	Denote $$F_1(\theta,y;u)=\langle u,ie^{i\theta}\phi_c(\cdot-y)\rangle,\ F_2(\theta,y;u)=\langle u,e^{i\theta}\partial_x\phi_c(\cdot-y)\rangle.$$
	Then $F_1(0,0;\phi_c)=F_2(0,0;\phi_c)=0$.
	
	According to the definitions of $F_1$ and $F_2$, we have
	\begin{equation*}
	\partial_{\theta}F(u,\theta)=
	\left(
	\begin{array}{cc}
	\partial_{\theta}F_1(\theta,y;u) &\partial_{y}F_1(\theta,y;u) \\
	\partial_{\theta}F_2(\theta,y;u) & \partial_{y}F_2(\theta,y;u) \\
	\end{array}
	\right).
	\end{equation*}
	Moreover, we have 
	\begin{align*}
	\partial_{\theta}F_1\mid_{(0,0;\phi_c)}=-\|\phi_c\|_{L^2}^2,\qquad
	&\partial_{y}F_1\mid_{(0,0;\phi_c)}=-2P(\phi_c),\\
	\partial_{\theta}F_2\mid_{(0,0;\phi_c)}=2P(\phi_c),\qquad
	&\partial_{y}F_2\mid_{(0,0;\phi_c)}=\|\partial_x\phi_c\|_{L^2}^2.
	\end{align*}
	Then, from \eqref{22} and \eqref{PM}, the Jacobian 
	\begin{align*}
	%\begin{gather*}
	%\begin{vmatrix}-\|\phi_c\|_{L^2}^2 & -2P(\phi_c)\\ 2P(\phi_c)&\|\partial_x\phi_c\|_{L^2}^2\end{vmatrix}
	|\partial_{\theta}F(u,\theta)\mid_{(0,0;\phi_c)}=&-\|\phi_c\|_{L^2}^2\|\partial_x\phi_c\|_{L^2}^2+4P^2(\phi_c)\\
	%=&-\frac{c^2}{4}\|\phi_c\|_{L^2}^4+4P^2(\phi_c)\\
	%=&-c^2M^2(\phi_c)+4P^2(\phi_c)\\
	=&-\sigma(2-\sigma)c^2M^2(\phi_c)\ne 0.
	%\end{gather*}
	\end{align*}
	Therefore by implicit function theorem, there exist a $\varepsilon_0>0$ and a unique $\C^1$-function
	$\theta=\theta(u)$, $y=y(u)$
	such that for any $u\in U_{\varepsilon_0}(\phi_c)$,
	$$\langle u,ie^{i\theta}\phi_c(\cdot-y)\rangle=0,\ \langle u,e^{i\theta}\partial_x\phi_c(\cdot-y)\rangle=0.$$
	
	Moreover, (2) follows from the implicit function differentiability theorem, and (3) follows from the uniqueness of the implicit functions. 
	
	This concludes the proof of Proposition \ref{pro}.
\end{proof}

\section{proof of Theorem \ref{thm:mainthm}}
We argue for contradiction and suppose that $u\in U_{\varepsilon_0}(\phi_c)$. Moreover, we define
$$A(u)=\langle iu,e^{i\theta}\psi(\cdot-y)\rangle,$$
and
$$
q(u)=iA'(u).
$$
Then, we have
\begin{align}\label{15}
q(u)=e^{i\theta}\psi(\cdot-y)+i\theta_u\langle u,e^{i\theta}\psi(\cdot-y)\rangle+i y_u \langle iu,-e^{i\theta}\partial _x\psi(\cdot-y)\rangle.
\end{align}

\begin{lem}
	For $u\in U_{\varepsilon_0}(\phi_c)$, $q(u)$ is continuous from $U_{\varepsilon_0}(\phi_c)$ to $\H1$ and  $q(\phi_c)=\psi$.
\end{lem}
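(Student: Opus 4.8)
The plan is to deduce both assertions directly from the explicit formula \eqref{15} for $q(u)$, using throughout that, by Proposition \ref{pro}, the modulation parameters $\theta=\theta(u)$ and $y=y(u)$ are $C^1$ functions of $u\in U_{\varepsilon_0}(\phi_c)$ whose derivatives $\theta_u,y_u$ are uniformly bounded in $\H1$.

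First I would compute the value at $\phi_c$. The orthogonality conditions in Proposition \ref{pro}(1) that determine $(\theta,y)$ are satisfied at $(\theta,y)=(0,0)$ when $u=\phi_c$, so the uniqueness part of Proposition \ref{pro} forces $\theta(\phi_c)=0$ and $y(\phi_c)=0$. Substituting $u=\phi_c$, $\theta=0$, $y=0$ into \eqref{15} yields
\[
q(\phi_c)=\psi+i\theta_u\langle\phi_c,\psi\rangle-i y_u\langle i\phi_c,\partial_x\psi\rangle .
\]
Both correction terms vanish: the first scalar equals $\langle M'(\phi_c),\psi\rangle=0$ by \eqref{M'} and conclusion (2) of Proposition \ref{ND}; for the second, integrating by parts gives $\langle i\phi_c,\partial_x\psi\rangle=-\langle i\partial_x\phi_c,\psi\rangle=-\langle P'(\phi_c),\psi\rangle=0$ by \eqref{P'} and the same orthogonality. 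Hence $q(\phi_c)=\psi$.

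For continuity I would show that each of the three summands in \eqref{15} is a continuous $\H1$-valued function of $u$. The leading term $e^{i\theta}\psi(\cdot-y)$ is the composition of the continuous map $u\mapsto(\theta(u),y(u))$ with $(\theta,y)\mapsto e^{i\theta}\psi(\cdot-y)$, and the latter is continuous into $\H1$ since $\psi\in\H1$ and phase rotation and translation act continuously on $\H1$. In the other two summands the scalar coefficients $\langle u,e^{i\theta}\psi(\cdot-y)\rangle$ and $\langle iu,-e^{i\theta}\partial_x\psi(\cdot-y)\rangle$ depend continuously on $u$, being bounded bilinear expressions in $u$ and in the continuously varying functions $e^{i\theta}\psi(\cdot-y)$ and $e^{i\theta}\partial_x\psi(\cdot-y)$, the latter needing only $\partial_x\psi\in L^2$, while the $\H1$-valued factors $\theta_u$ and $y_u$ vary continuously because $\theta,y$ are $C^1$. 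A finite sum of products of continuous scalar- and $\H1$-valued maps is continuous, which gives the claim.

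The only genuinely delicate point is the $\H1$-continuity of the gradient maps $u\mapsto\theta_u$ and $u\mapsto y_u$; this is precisely the $C^1$-regularity of $\theta,y$ supplied by the implicit function theorem in Proposition \ref{pro}, together with the uniform bound $\|\theta_u\|_{\H1},\|y_u\|_{\H1}\le C$, so no new estimate is required beyond the continuity of translation and modulation on $\H1$ and the orthogonality relations already recorded in Proposition \ref{ND}.
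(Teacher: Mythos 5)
Your proposal is correct and follows essentially the same route as the paper: evaluate \eqref{15} at $u=\phi_c$ with $(\theta,y)=(0,0)$, kill the two scalar coefficients via the orthogonality $\langle M'(\phi_c),\psi\rangle=\langle P'(\phi_c),\psi\rangle=0$ from Proposition \ref{ND}(2) (with the same integration by parts turning $\langle i\phi_c,-\partial_x\psi\rangle$ into $\langle i\partial_x\phi_c,\psi\rangle$), and deduce continuity from the $C^1$-regularity of the modulation parameters in Proposition \ref{pro}(2). You merely spell out details the paper leaves implicit, such as $\theta(\phi_c)=y(\phi_c)=0$ from uniqueness in the implicit function theorem and the term-by-term continuity of the three summands.
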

\begin{proof}
	By Proposition \ref{ND} (2),
	\begin{align*}
	q(\phi_c)&=\psi+(\phi_c,\psi)i\theta_u(\phi_c)+(i\phi_c,-\partial_x\psi)y_u(\phi_c)\\
	&=\psi+(\phi_c,\psi)i\theta_u(\phi_c)+(i\partial_x\phi_c,\psi)y_u(\phi_c)\\
	&=\psi.
	\end{align*}
	Moreover, from the definition \eqref{15} and Proposition \ref{pro} (2), we know that $q(u)$ is continuous from $U_{\varepsilon_0}(\phi_c)$ to $\H1$.
	This proves the lemma.
\end{proof}

Now, we prove Theorem 1.
\begin{proof}
	From \eqref{eqs:gDNLS}, we know $i\partial_tu=E'(u)$, thus
	$$\partial_t A(u)=\langle A'(u), \partial_tu \rangle=\langle iA'(u),E'(u)\rangle.$$
	Since $A\left(e^{i\theta_0}u(\cdot-y_0)\right)=A(u)$, for any $(\theta_0,y_0)\in\R^2$. Differentiating with $\theta_0$ and $y_0$, we have
	\begin{align*}%\label{13}
	\langle iA'(u),M'(u)\rangle=\langle iA'(u),P'(u)\rangle=0.
	\end{align*}
	Note that $q(u)=i A'(u)$, then using the identities above, we have
	\begin{align*}
	\partial_tA(u(t))&=\langle iA'(u),S'_c(u)\rangle=\langle q(u),S'_c(u)\rangle\\
	&=\frac{1}{\lambda}\left[S_c(u+\lambda q(u))-S_c(u)-\lambda^2\int_{0}^{1}(1-s)\langle S''_c\left(\phi_c+s\lambda q(u)\right)q(u),q(u)\rangle ds\right].
	\end{align*}
	
   Next, we denote $\tilde u=e^{-i\theta}u(\cdot+y)$. Combinig with $u\in U_{\varepsilon_0}(\phi_c)$ and Proposition \ref{pro}, we have 
   $$\|\tilde u-\phi_c\|_{H^1}\le \varepsilon_0.$$
	Then, choosing $\lambda,\ \varepsilon_0$ small enough, and by Proposition \ref{ND}, we get 
	\begin{align*}
	\int_{0}^{1}(1-s)\langle S''_c\left(\phi_c+s\lambda q(u)\right)q(u),q(u)\rangle ds
	&=\int_{0}^{1}(1-s)\langle S''_c\left(\phi_c+s\lambda q(\widetilde{u})\right)q(\widetilde u),q(\widetilde{u})\rangle ds\\
	&=\langle S''(\phi_c)\psi, \psi \rangle+O\left (\lambda+\| q(\widetilde{u})-q(\phi_c)\|_{H^1}\right)\\
	&=\langle S''(\phi_c)\psi, \psi \rangle+O(\lambda+\varepsilon_0)\\
	&<\frac{1}{2}\langle S''(\phi_c)\psi, \psi \rangle\\
	&<0.
	\end{align*}
	Hence, we get
	\begin{align}\label{A'}
	\partial_tA(u(t))>\frac{1}{\lambda}\left[S_c(u+\lambda q(u))-S_c(u)\right].
	\end{align}
	Now we claim that
	\begin{align}
	\langle K'_c(\phi_c),\psi\rangle\neq 0.\label{17.51}
	\end{align}
	To show this, we need the following lemma.
	\begin{lem}\label{v}
		If $v\in\H1$ satisfies $\langle K'_c(\phi_c),v\rangle=0$, then $\langle S''_c(\phi_c)v,v\rangle\geq 0$.
	\end{lem}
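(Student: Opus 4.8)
The plan is to exploit that $\phi_c$ is a constrained minimizer of $S_c$ on the manifold $\{K_c=0\}$ (Lemma \ref{relation}), so that the second variation of $S_c$ is nonnegative in every direction tangent to this manifold, i.e. in every $v$ with $\langle K_c'(\phi_c),v\rangle=0$. First I would record the algebraic identity linking $K_c'$ and $S_c''$. Since $K_c(u)=\langle S_c'(u),u\rangle$, differentiation gives $\langle K_c'(u),h\rangle=\langle S_c''(u)h,u\rangle+\langle S_c'(u),h\rangle$; evaluating at $u=\phi_c$ and using $S_c'(\phi_c)=0$ together with the self-adjointness from Lemma \ref{14.49} yields
$$\langle K_c'(\phi_c),h\rangle=\langle S_c''(\phi_c)\phi_c,h\rangle \qquad\text{for all } h\in\H1 .$$
In particular, taking $h=\phi_c$ and invoking Lemma \ref{MaxC} gives $\langle K_c'(\phi_c),\phi_c\rangle=\langle S_c''(\phi_c)\phi_c,\phi_c\rangle<0$, so $K_c'(\phi_c)\neq 0$ and the level set $\{K_c=0\}$ is a genuine hypersurface near $\phi_c$.

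Next I would build a curve on the constraint realizing the tangent direction $v$. For the two-parameter map $(s,t)\mapsto K_c(\phi_c+sv+t\phi_c)$ one has $\partial_t$ at $(0,0)$ equal to $\langle K_c'(\phi_c),\phi_c\rangle\neq0$, so the implicit function theorem produces a $C^1$ function $t(s)$ with $t(0)=0$ such that $u(s):=\phi_c+sv+t(s)\phi_c$ satisfies $K_c(u(s))=0$ for all small $s$. Differentiating $K_c(u(s))=0$ at $s=0$ and using the hypothesis $\langle K_c'(\phi_c),v\rangle=0$ forces $t'(0)=0$, hence $u'(0)=v$. Since $u(0)=\phi_c\neq0$, we have $u(s)\neq0$ for $s$ small, so the variational characterization in Lemma \ref{relation} applies and gives $S_c(u(s))\geq S_c(\phi_c)$ for all such $s$.

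Finally I would differentiate twice. The scalar function $g(s):=S_c(u(s))$ attains a local minimum at $s=0$, so $g''(0)\geq0$. Using $g''(0)=\langle S_c''(\phi_c)u'(0),u'(0)\rangle+\langle S_c'(\phi_c),u''(0)\rangle$ together with $S_c'(\phi_c)=0$ and $u'(0)=v$, I conclude $\langle S_c''(\phi_c)v,v\rangle=g''(0)\geq0$. The only delicate point is the curve construction: one must confirm $K_c'(\phi_c)\neq0$ (supplied by Lemma \ref{MaxC}) in order to apply the implicit function theorem, and then check that the correction term has vanishing first-order coefficient, which is precisely where the hypothesis $\langle K_c'(\phi_c),v\rangle=0$ enters. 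The $C^2$ regularity of $S_c$ on $\H1$ needed to justify these Taylor expansions follows from the smoothness of the nonlinearity appearing in $E$, $P$ and $M$.
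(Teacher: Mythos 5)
Your proof is correct. The paper does not prove this lemma inline at all --- it simply cites Lemma 4 of Ohta \cite{Ohta-14} --- and your argument (the constrained-minimization property from Lemma \ref{relation}, the identity $\langle K_c'(\phi_c),h\rangle=\langle S''_c(\phi_c)\phi_c,h\rangle$ with nondegeneracy $\langle K_c'(\phi_c),\phi_c\rangle<0$ supplied by Lemma \ref{MaxC}, an implicit-function-theorem curve $u(s)$ on $\{K_c=0\}$ with $u'(0)=v$, and the second-variation inequality $g''(0)\geq 0$) is exactly the standard proof that the cited reference gives, with the $C^2$ regularity of $K_c$ and $S_c$ for $\sigma>1$ correctly flagged as the point justifying the twice-differentiability of $t(s)$.
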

	\begin{proof}
		See Lemma 4 in \cite{Ohta-14} for the proof.
	\end{proof}
	By Proposition \ref{ND} (3) and Lemma \ref{v}, we have \eqref{17.51}.
	Then applying the implicit functional theorem, we can  find a $\lambda(u)\in(-\lambda_0,\lambda_0)\setminus\{0\}$, such that for any $u\in U_{\varepsilon_0}(\phi_c)$,
	$$K_c(u+\lambda(u)q(u))=0.$$
	Hence, by Lemma \ref{relation}, we have
	$$
	S_c(u+\lambda(u)q(u))\ge S_c(\phi_c).
	$$
	Without loss of generality, we assume $\lambda(u)>0$.
	We choose
	$$
	u_0=\phi_c+\beta\psi.
	$$
	Then, by the conservation laws, we have
	$S_c(u)=S_c(\phi_c+\beta\psi)$.
	Hence,
	\begin{align*}
	S_c(u+\lambda(u)q(u))-S_c(u)\geq S_c(\phi_c)-S_c(\phi_c+\beta\psi).
	\end{align*}
	From  Lemma \ref{cor: Sc}, we have $S_c(\phi_c)-S_c(\phi_c+\beta\psi)>0$. Thus, by \eqref{A'},
	%$$\partial_tA(u(t)) \geq \frac{1}{\lambda(u)}(S_c(\phi_c)-S_c(\phi_c+\beta\psi))$$
	$$\partial_tA(u(t))\geq \frac{1}{\lambda_0}(S_c(\phi_c)-S_c(u_0))>0.$$
	Therefore, we get that $A(u(t))\rightarrow +\infty$ as $t\rightarrow \infty$. However,
	$$|A(u(t))|\leq\|u\|_{L^2}\|\psi\|_{L^2}\leq C\ \ {\rm for\ any}\ \ t>0.$$
	This is a contradiction. This finishes the proof of Theorem \ref{thm:mainthm}.
\end{proof}

\renewcommand{\theequation}{A.\arabic{equation}}
\setcounter{equation}{0}
\newtheorem{Alem}{Lemma A.}
\section*{Appendix}

In this appendix, we prove the following element lemmas used in Section \ref{sec:3}.
\begin{Alem}\label{gj2}
	Let $R>0$, then
	\begin{align*}
	\frac{1}{2}\partial_c\int_{\R}\chi_R|\phi_c|^2dx=\partial_c M(\phi_c)+O(R^{-\frac{2}{\sigma}+1}).
	\end{align*}
\end{Alem}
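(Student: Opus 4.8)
The plan is to reduce the claim to an estimate on the tail of an explicit integral. Since $|\phi_c(x)|=\varphi_c(x)$, we have $M(\phi_c)=\tfrac12\int_\R\varphi_c^2\,dx$, and because $\chi_R$ is independent of $c$,
\[
\frac12\partial_c\int_\R\chi_R|\phi_c|^2\,dx-\partial_c M(\phi_c)
=-\frac12\,\partial_c\!\int_\R(1-\chi_R)\varphi_c^2\,dx.
\]
From the explicit formula \eqref{0.2}, both $\varphi_c^2$ and $\partial_c\varphi_c^2$ admit integrable majorants that are locally uniform in $c$, so by dominated convergence I may differentiate under the integral sign. The problem thus reduces to showing
\[
\Big|\int_\R(1-\chi_R)\,\partial_c\varphi_c^2\,dx\Big|=O\big(R^{-\tfrac2\sigma+1}\big).
\]

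Next I would determine the pointwise size of $\partial_c\varphi_c^2$. Writing $\varphi_c^2=(2(\sigma+1))^{1/\sigma}c^{1/\sigma}(\sigma^2c^2x^2+1)^{-1/\sigma}$ from \eqref{0.2} and differentiating in $c$ produces two terms,
\[
\partial_c\varphi_c^2=\frac{(2(\sigma+1))^{1/\sigma}}{\sigma}\Big[c^{\frac1\sigma-1}(\sigma^2c^2x^2+1)^{-\frac1\sigma}-2\sigma^2c^{\frac1\sigma+1}x^2(\sigma^2c^2x^2+1)^{-\frac1\sigma-1}\Big].
\]
For $|x|\gtrsim1$ the first bracketed term is $\lesssim\langle x\rangle^{-2/\sigma}$, while in the second the factor $x^2$ is exactly compensated by the extra power in the denominator, giving again $\lesssim\langle x\rangle^{-2/\sigma}$. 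Hence $|\partial_c\varphi_c^2|\lesssim\langle x\rangle^{-2/\sigma}$, with implicit constant depending only on $\sigma$ and $c$.

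Finally, since $1-\chi_R$ is supported in $\{|x|\ge R\}$ and $2/\sigma>1$ for $\sigma\in(1,2)$, I would conclude
\[
\Big|\int_\R(1-\chi_R)\,\partial_c\varphi_c^2\,dx\Big|\lesssim\int_{|x|\ge R}\langle x\rangle^{-2/\sigma}\,dx=O\big(R^{1-\tfrac2\sigma}\big),
\]
which combined with the first display proves the lemma. There is no genuine difficulty here beyond the bookkeeping; the one point that deserves care is checking that differentiation in $c$ does not degrade the $\langle x\rangle^{-2/\sigma}$ decay rate, since it is precisely this decay—integrable because $\sigma<2$—that produces the exponent $-\tfrac2\sigma+1$ in $R$.
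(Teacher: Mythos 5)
Your proof is correct and follows essentially the same route as the paper: both reduce the claim to showing $\partial_c\int_{\R}(1-\chi_R)\varphi_c^2\,dx=O\big(R^{-\frac{2}{\sigma}+1}\big)$ and exploit the decay rate $\langle x\rangle^{-2/\sigma}$, whose tail over $\{|x|\ge R\}$ is integrable precisely because $\sigma<2$. The only cosmetic difference is that the paper first rescales $x\mapsto x/(\sigma c)$, so the $c$-derivative splits into a power-of-$c$ term and a $\chi'$ term localized at $|x|\sim R$, whereas you differentiate $\varphi_c^2$ directly under the integral sign (with the dominated-convergence justification duly noted); both yield the same tail estimate.
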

\begin{proof}
	From the definition of $M$, we have
	\begin{align*}
	\frac{1}{2} \int_{\R}\chi_R|\phi_c|^2dx=  M(\phi_c)+\frac{1}{2} \int_{\R}\left(\chi_R-1\right)|\phi_c|^2dx.
	\end{align*}
	Together with \eqref{0.1} and \eqref{0.2}, we get
	\begin{align}\label{20}
	\int_{\R}\left(\chi_R-1\right)|\phi_c|^2dx=&\int_{\R}\left(\chi_R-1\right)\varphi_c^2dx\notag\\
	=&\left[2c(\sigma+1)\right]^{\frac{1}{\sigma}}\sigma^{-1}c^{-1}\int_{\R}\Bigl[\chi\left(\frac{x}{\sigma c R}\right)-1\Bigr]\left(\frac{1}{1+x^2}\right)^{\frac{1}{\sigma}}dx\notag\\
	=&c_1(\sigma)c^{\frac{1}{\sigma}-1}\int_{\R}\Bigl[\chi\left(\frac{x}{\sigma c R}\right)-1\Bigr]\left(\frac{1}{1+x^2}\right)^{\frac{1}{\sigma}}dx,
	\end{align}
	where $c_1(\sigma)=\sigma^{-1}\left[2(\sigma+1)\right]^{\frac{1}{\sigma}}$.
	Now, differentiating \eqref{20} with respect to $c$, we have
	\begin{align*}
	\partial_c\int_{\R}\left(\chi_R-1\right)|\phi_c|^2dx
	=&(\frac{1}{\sigma}-1)c_1(\sigma)c^{\frac{1}{\sigma}-2}\int_{\R}\Bigl[\chi\left(\frac{x}{\sigma c R}\right)-1\Bigr]\left(\frac{1}{1+x^2}\right)^{\frac{1}{\sigma}}dx\\
	&\quad -c_1(\sigma)c^{\frac{1}{\sigma}-2}\int_{\R}\chi'\left(\frac{x}{\sigma c R}\right)\frac{x}{\sigma  cR}
	\left(\frac{1}{1+x^2}\right)^{\frac{1}{\sigma}}dx.
	\end{align*}
	Note that
	$$
	\int_{\R}\Bigl[\chi\left(\frac{x}{\sigma c R}\right)-1\Bigr]\left(\frac{1}{1+x^2}\right)^{\frac{1}{\sigma}}dx, \quad
	\int_{\R}\chi'\left(\frac{x}{\sigma c R}\right)\frac{x}{\sigma  cR}
	\left(\frac{1}{1+x^2}\right)^{\frac{1}{\sigma}}dx=O(R^{-\frac{2}{\sigma}+1}),
	$$
	we obtain
	$$
	\partial_c\int_{\R}\left(\chi_R-1\right)|\phi_c|^2dx
	=O(R^{-\frac{2}{\sigma}+1}).
	$$
	This finishes the proof.
\end{proof}
\begin{Alem}\label{gj1}
	Let $R>0$, then \begin{align*}
	\frac{1}{2}\partial_c\mbox{Im}\int_{\R}\chi_R\phi_c\overline{\partial_x\phi_c}dx=\partial_c P(\phi_c)+O(R^{-\frac{2}{\sigma}+1}).
	\end{align*}
\end{Alem}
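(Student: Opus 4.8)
The plan is to mirror the structure of Lemma A.1. Starting from the identity $P(\phi_c)=\frac12\mathrm{Im}\int_\R\phi_c\overline{\partial_x\phi_c}\,dx$, I would split off the cutoff by writing
\begin{align*}
\frac12\mathrm{Im}\int_\R\chi_R\phi_c\overline{\partial_x\phi_c}\,dx
=P(\phi_c)+\frac12\mathrm{Im}\int_\R(\chi_R-1)\phi_c\overline{\partial_x\phi_c}\,dx,
\end{align*}
and then differentiate in $c$. In this way the statement reduces to showing that the $c$-derivative of the remainder integral is $O(R^{-\frac2\sigma+1})$.

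First I would make the integrand explicit. Using \eqref{0.1} and \eqref{AAB}, and the fact that $\varphi_c$ is real, one finds
\begin{align*}
\phi_c\overline{\partial_x\phi_c}
=\varphi_c\Big[\big(-i\tfrac c2+\tfrac{i}{2\sigma+2}\varphi_c^{2\sigma}\big)\varphi_c+\partial_x\varphi_c\Big],
\end{align*}
whose imaginary part is
\begin{align*}
\mathrm{Im}\big(\phi_c\overline{\partial_x\phi_c}\big)=-\frac c2\varphi_c^2+\frac1{2\sigma+2}\varphi_c^{2\sigma+2}.
\end{align*}
(This is consistent with \eqref{P}.) Thus the remainder is a linear combination of $\int_\R(\chi_R-1)\varphi_c^2\,dx$ and $\int_\R(\chi_R-1)\varphi_c^{2\sigma+2}\,dx$. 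The $c$-derivative of the first of these, even with its $c$-dependent prefactor $-\frac c2$, is controlled by Lemma A.1 together with the product rule, so only the second integral requires new work.

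For the $\varphi_c^{2\sigma+2}$ term I would use the explicit formula \eqref{0.2}, factor out the $c$-power, and rescale $y=\sigma c x$ exactly as in \eqref{20}, turning $\sigma^2(cx)^2+1$ into $1+y^2$. This yields a prefactor proportional to $c^{\frac1\sigma}$ times $\int_\R[\chi(\frac{y}{\sigma cR})-1](1+y^2)^{-\frac1\sigma-1}\,dy$. Differentiating in $c$ produces two kinds of terms: one in which the tail integral $\int_\R[\chi(\frac{y}{\sigma cR})-1](1+y^2)^{-\frac1\sigma-1}\,dy$ appears (supported on $|y|\gtrsim R$), and one in which $\chi'(\frac{y}{\sigma cR})$ localizes the integration to $|y|\sim R$, exactly as in the proof of Lemma A.1.

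The main obstacle is the tail estimate, and it is here that the hypothesis $\sigma<2$ is essential. On $|y|\gtrsim R$ one has $(1+y^2)^{-\frac1\sigma}\sim|y|^{-\frac2\sigma}$, so $\int_{|y|\gtrsim R}|y|^{-\frac2\sigma}\,dy\sim R^{-\frac2\sigma+1}$ precisely because $\frac2\sigma>1$; the $(1+y^2)^{-\frac1\sigma-1}$ tail and the $\chi'$-localized piece are of the same or smaller order in $R$, so every contribution is $O(R^{-\frac2\sigma+1})$. Since the slowest-decaying contribution is the one from the $\varphi_c^2$ part already handled in Lemma A.1, collecting all the terms gives the claimed bound and completes the proof.
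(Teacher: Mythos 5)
Your proposal is correct and follows essentially the same route as the paper's proof: the same splitting off of $\frac12\partial_c\mathrm{Im}\int_\R(\chi_R-1)\phi_c\overline{\partial_x\phi_c}\,dx$, the same reduction via \eqref{0.1} and \eqref{AAB} to the combination $-\frac c2\int(\chi_R-1)\varphi_c^2\,dx+\frac{1}{2(\sigma+1)}\int(\chi_R-1)\varphi_c^{2\sigma+2}\,dx$, and the same rescaling, differentiation in $c$, and tail/$\chi'$ estimates yielding $O(R^{-\frac2\sigma+1})$ and $O(R^{-\frac2\sigma-1})$. The only cosmetic difference is that you recycle Lemma A.1 (with the product rule and the undifferentiated tail bound, which you do state) for the $\varphi_c^2$ term, whereas the paper treats both terms together in a single combined expression; this changes nothing of substance.
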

\begin{proof}
	By the definition of $P$, we have
	\begin{align*}
	\frac{1}{2}\partial_c\mbox{Im}\int_{\R}\chi_R\phi_c\overline{\partial_x\phi_c}dx
	=\partial_cP(\phi_c)+\frac{1}{2}\partial_c\mbox{Im}\int_{\R}(\chi_R-1)\phi_c\overline{\partial_x\phi_c}dx.
	\end{align*}
	From \eqref{0.1} and \eqref{AAB}, we obtain
	\begin{align*}
	\mbox{Im}\int_{\R}(\chi_R-1)\phi_c\overline{\partial_x\phi_c}dx
	=&\mbox{Im}\int_{\R}(\chi_R-1)\varphi_c\left(-\frac{c}{2}i\varphi_c+\frac{i}{2(\sigma+1)}\varphi_c^{2\sigma+1}\right)dx\\
	=&-\frac{c}{2}\int_{\R}(\chi_R-1)\varphi_c^2dx+\frac{1}{2(\sigma+1)}\int_{\R}(\chi_R-1)\varphi_c^{2\sigma+2}dx.
	\end{align*}
	Using \eqref{0.2}, we further write $\mbox{Im}\int_{\R}(\chi_R-1)\phi_c\overline{\partial_x\phi_c}dx$ as
	\begin{align}\label{19}
	&-\frac{c}{2}\big[2c(\sigma+1)\big]^{\frac{1}{\sigma}}\sigma^{-1}c^{-1}\int_{\R}\Bigl[\chi\left(\frac{x}{\sigma c R}\right)-1\Bigr]\left(\frac{1}{1+x^2}\right)^{\frac{1}{\sigma}}dx\notag\\
	&\quad +\frac{1}{2(\sigma+1)}\big[2c(\sigma+1)\big]^{\frac{1}{\sigma}+1}\sigma^{-1}c^{-1}\int_{\R}\Bigl[\chi\left(\frac{x}{\sigma c R}\right)-1\Bigr]\left(\frac{1}{1+x^2}\right)^{\frac{1}{\sigma}+1}dx\notag\\
   &=c_2(\sigma)c^{\frac{1}{\sigma}}\int_{\R}\Bigl[\chi\left(\frac{x}{\sigma c R}\right)-1\Bigr]\left(\frac{1}{1+x^2}\right)^{\frac{1}{\sigma}}dx\notag\\
	&\quad+ c_3(\sigma)c^{\frac{1}{\sigma}}\int_{\R}\Bigl[\chi\left(\frac{x}{\sigma c R}\right)-1\Bigr]\left(\frac{1}{1+x^2}\right)^{\frac{1}{\sigma}+1}dx,
	\end{align}
	where $c_2(\sigma)=-\frac{1}{2}\sigma^{-1}\big[2(\sigma+1)\big]^{\frac{1}{\sigma}}$ and $c_3(\sigma)=\sigma^{-1}\big[2(\sigma+1)\big]^{\frac{1}{\sigma}}$. Differentiating \eqref{19} with respect to $c$, and treating similarly as the proof in the previous lemma, we obtain
	\begin{align*}
	\partial_c\mbox{Im}\int_{\R}(\chi_R-1)\phi_c\overline{\partial_x\phi_c}dx
	=&\frac{1}{\sigma}c^{\frac{1}{\sigma}-1}\biggl[c_2(\sigma)\int_{\R}\Bigl[\chi\left(\frac{x}{\sigma c R}\right)-1\Bigr]\left(\frac{1}{1+x^2}\right)^{\frac{1}{\sigma}}dx\\
	&+ c_3(\sigma)\int_{\R}\Bigl[\chi\left(\frac{x}{\sigma c R}\right)-1\Bigr]\left(\frac{1}{1+x^2}\right)^{\frac{1}{\sigma}+1}dx\biggr]\\
	&-c_2(\sigma)c^{\frac{1}{\sigma}-1}\int_{\R}\chi'\left(\frac{x}{\sigma c R}\right)\frac{x}{\sigma  cR}
	\left(\frac{1}{1+x^2}\right)^{\frac{1}{\sigma}}dx\\
	&-c_3(\sigma)c^{\frac{1}{\sigma}-1}\int_{\R}\chi'\left(\frac{x}{\sigma c R}\right)\frac{x}{\sigma  cR}
	\left(\frac{1}{1+x^2}\right)^{\frac{1}{\sigma}+1}dx\\
	=&O(R^{-\frac{2}{\sigma}+1})+O(R^{-\frac{2}{\sigma}-1})\\
	=&O(R^{-\frac{2}{\sigma}+1}).
	\end{align*}
	This proves the lemma.
\end{proof}

\begin{Alem}\label{lem:S''-Rc}
	Let $R>0$. Then
	\begin{align*}
	|S''_c(\phi_c)(1-\chi_R)\partial_c\phi_c\,(x)|\lesssim \big(1-\chi_{\frac R2}(x)\big)\langle x\rangle^{-1-\frac{1}{\sigma}}.
	\end{align*}
\end{Alem}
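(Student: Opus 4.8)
The plan is to prove the pointwise estimate on $S''_c(\phi_c)(1-\chi_R)\partial_c\phi_c$ by direct computation, exploiting the fact that the second-order operator $S''_c(\phi_c)$ applied to a product involving the cutoff $(1-\chi_R)$ produces three types of terms: those where the derivatives hit $\partial_c\phi_c$ (giving the ``intrinsic'' decay of the auxiliary profile), those where derivatives hit the cutoff $(1-\chi_R)$, and zeroth-order multiplication terms. The crucial input is the growth/decay budget established earlier: from \eqref{10.18} together with the Appendix estimate \eqref{10.21} we have $|\partial_c\phi_c|\lesssim\langle x\rangle^{1-\frac1\sigma}$ and $|\partial_x\partial_c\phi_c|\lesssim\langle x\rangle^{1-\frac1\sigma}$, while the explicit formula \eqref{0.2} gives $|\phi_c|\lesssim\langle x\rangle^{-1/\sigma}$ and $|\partial_x\phi_c|\lesssim\langle x\rangle^{-1/\sigma}$ (indeed $\varphi_c$ and its derivative decay like $\langle x\rangle^{-1/\sigma}$ and $\langle x\rangle^{-1/\sigma-1}$ respectively).

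First I would write out $S''_c(\phi_c)g$ explicitly for $g=(1-\chi_R)\partial_c\phi_c$ using \eqref{S''c}: the leading term is $-\partial_x^2 g+ci\partial_x g+\tfrac{c^2}{4}g$, and the remaining terms are the three nonlocal/potential contributions, each carrying a factor $|\phi_c|^{2\sigma-2}$ or $|\phi_c|^{2\sigma}$ multiplied by $\partial_x\phi_c$ or $\partial_x g$. The key observation is that every one of these lower-order terms contains at least a factor $|\phi_c|^{2\sigma}$ (equivalently $|\phi_c|^{2\sigma-2}|\phi_c|^2$), which decays like $\langle x\rangle^{-2}$, or a factor $|\phi_c|^{2\sigma}$ times $\partial_x g$; pairing this decay against the at-most-$\langle x\rangle^{1-1/\sigma}$ growth of $g$ and the $\langle x\rangle^{-1/\sigma}$ decay of $\partial_x\phi_c$ yields a net bound of order $\langle x\rangle^{-1-1/\sigma}$ (or better) on the support of $(1-\chi_{R/2})$. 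The differentiated-cutoff terms $\partial_x\chi_R$ and $\partial_x^2\chi_R$ are supported in $R\le|x|\le 2R$ and are each $O(R^{-1})$ or $O(R^{-2})$, multiplying $\partial_c\phi_c$ or $\partial_x\partial_c\phi_c$ of size $\langle x\rangle^{1-1/\sigma}$; on that dyadic annulus these are comfortably dominated by $\langle x\rangle^{-1-1/\sigma}$ as well.

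The one genuinely dangerous term is the principal part $-\partial_x^2\big[(1-\chi_R)\partial_c\phi_c\big]$, since $\partial_c\phi_c$ itself \emph{grows} like $\langle x\rangle^{1-1/\sigma}$ and differentiating twice could naively only gain back $\langle x\rangle^{-2}$, landing at $\langle x\rangle^{-1-1/\sigma}$ --- exactly the claimed rate, with no room to spare. The main obstacle is therefore to check that the top-order contribution $-(1-\chi_R)\partial_x^2\partial_c\phi_c+ci(1-\chi_R)\partial_x\partial_c\phi_c+\tfrac{c^2}{4}(1-\chi_R)\partial_c\phi_c$ does not actually exhibit the worst-case growth $\langle x\rangle^{1-1/\sigma}$ but collapses to order $\langle x\rangle^{-1-1/\sigma}$. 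I expect this to follow from the differentiated elliptic relation: differentiating \eqref{Elliptic-comp} in $c$ shows that $-\partial_x^2\partial_c\phi_c+ci\partial_x\partial_c\phi_c+\tfrac{c^2}{4}\partial_c\phi_c$ equals precisely the nonlinear and forcing terms $-\tfrac c2 M'(\phi_c)-P'(\phi_c)$ plus the $c$-derivatives of the nonlinearity, all of which carry the decaying factor $\phi_c$ or $|\phi_c|^{2\sigma}$ and hence decay like $\langle x\rangle^{-1/\sigma}$ or faster. Thus the apparent growth in the principal part is cancelled by the elliptic equation, and the residual is of the required order on $\mathrm{supp}(1-\chi_{R/2})$.

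Assembling the three groups of terms, each is bounded by $\big(1-\chi_{R/2}(x)\big)\langle x\rangle^{-1-1/\sigma}$ (the factor $1-\chi_{R/2}$ recording that everything vanishes for $|x|\le R/2$ once $R$ is at least $2$, since the support of $1-\chi_R$ lies in $|x|\ge R$), which gives the claim. I would present the computation by first reducing via the differentiated elliptic identity to kill the principal growth, then bounding each lower-order term by the product of its decay factors, and finally handling the cutoff-derivative terms on the annulus $R\le|x|\le 2R$ separately.
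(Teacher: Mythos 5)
Your strategy---Leibniz splitting plus killing the growth of the principal part via the differentiated elliptic identity \eqref{S'M'P'}---is genuinely different from the paper's proof, which instead strips the oscillation first by the conjugation identity $\partial_x^2f-ci\partial_xf-\frac{c^2}{4}f=e^{\frac c2 ix}\partial_x^2\bigl(e^{-\frac c2 ix}f\bigr)$ and then differentiates the ungauged profile in \eqref{S''x}. However, as written your argument has two genuine quantitative gaps, both caused by the phase $e^{i\frac c2x}$ inside $\partial_c\phi_c$, which makes $x$-derivatives gain \emph{no} decay: by \eqref{10.21}, $|\partial_x\partial_c\phi_c|\lesssim\langle x\rangle^{1-\frac1\sigma}$, and this is sharp. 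First, the cutoff cross term $2\partial_x\chi_R\,\partial_x\partial_c\phi_c$ produced by $-\partial_x^2$ has size $R^{-1}\cdot R^{1-\frac1\sigma}=R^{-\frac1\sigma}$ on the annulus $|x|\sim R$, which exceeds the claimed bound $\langle x\rangle^{-1-\frac1\sigma}\sim R^{-1-\frac1\sigma}$ by a full factor of $R$; your ``comfortably dominated'' is an arithmetic slip. This term becomes admissible only after you combine it with the commutator of the drift $ci\partial_x$ with the cutoff, since
\begin{align*}
2\chi_R'\Bigl(\partial_x-\tfrac{ic}{2}\Bigr)\partial_c\phi_c
=2\chi_R'\,e^{\frac c2 ix}\partial_x\Bigl(e^{-\frac c2 ix}\partial_c\phi_c\Bigr),
\end{align*}
and by \eqref{10.18}, \eqref{10.19}, \eqref{10.20} the gauged derivative is $O(\langle x\rangle^{-\frac1\sigma})$ (each derivative of the ungauged bracket, e.g.\ $\partial_x(x\varphi_c)=\varphi_c+x\partial_x\varphi_c$, genuinely gains a power), which restores the bound $R^{-1-\frac1\sigma}$.

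Second, after applying \eqref{S'M'P'} your residual is $-\tfrac c2 M'(\phi_c)-P'(\phi_c)=-\tfrac c2\phi_c-i\partial_x\phi_c$, and each of these two terms decays only like $\langle x\rangle^{-\frac1\sigma}$; your conclusion that decay ``like $\langle x\rangle^{-1/\sigma}$ or faster'' is ``of the required order'' is a non sequitur, because the lemma demands $\langle x\rangle^{-1-\frac1\sigma}$, and with only $\langle x\rangle^{-\frac1\sigma}$ the pairing against $\partial_c\phi_c\sim\langle x\rangle^{1-\frac1\sigma}$ in \eqref{1} would give $\int_{|x|\ge R}\langle x\rangle^{1-\frac2\sigma}dx=\infty$ for $\sigma\in(1,2)$, and Proposition \ref{ND} would collapse. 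The statement survives thanks to an exact cancellation that you must verify: by \eqref{AAB},
\begin{align*}
-\tfrac c2\phi_c-i\partial_x\phi_c
=e^{i\frac c2x-\frac{i}{2(\sigma+1)}\int_{-\infty}^{x}\varphi_c^{2\sigma}(y)dy}
\Bigl(-\tfrac{1}{2\sigma+2}\varphi_c^{2\sigma+1}-i\partial_x\varphi_c\Bigr),
\end{align*}
where the $\tfrac c2\varphi_c$ contributions cancel and the remainder is $O(\langle x\rangle^{-1-\frac1\sigma})$ by \eqref{10.19}. With these two cancellations made explicit, your elliptic-identity route does close and is arguably tidier than the paper's direct expansion; but both of your central estimates, as stated, are off by a full power of $\langle x\rangle$, precisely the bookkeeping that the paper's conjugation trick eliminates by removing the phase before any derivative falls.
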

\begin{proof}
	According to \eqref{0.2}, we get
	\begin{align}\label{10.19}
	|\varphi_c|\lesssim \langle x\rangle^{-\frac{1}{\sigma}}, \quad |\partial_x\varphi_c|\lesssim \langle x\rangle^{-1-\frac{1}{\sigma}}, \quad\mbox{and}\quad |\partial_{xx}\varphi_c|\lesssim \langle x\rangle^{-2-\frac{1}{\sigma}}.
	\end{align}
	Moreover,
	\begin{align}\label{10.20}
	|\partial_c\varphi_c|\lesssim \langle x\rangle^{-\frac{1}{\sigma}},\quad\mbox{and}\quad |\partial_x \partial_c\varphi_c|\lesssim \langle x\rangle^{-1-\frac{1}{\sigma}}.
	\end{align}
	Combining with \eqref{AAB},  \eqref{10.18}, \eqref{10.19} and \eqref{10.20}, we have
	\begin{align}\label{10.21}
	|\partial_x\phi_c|\lesssim\langle x\rangle^{-\frac{1}{\sigma}},\quad|\partial_c\phi_c|\lesssim \langle x\rangle^{1-\frac{1}{\sigma}},\quad\mbox{and}\quad   |\partial_x\partial_c\phi_c|\lesssim \langle x\rangle^{1-\frac{1}{\sigma}}.
	\end{align}
	For suitable function $f$, we note that
	$$
	\partial_x^2f-ci\partial_xf-\frac{c^2}{4}f=e^{\frac c2ix}\partial_x^2\big(e^{-\frac c2ix}f\big).
	$$
	Together with \eqref{S''c}, we can write $S''_c(\phi_c)(1-\chi_R)\partial_c\phi_c$ as
	\begin{align}\label{S''x}
	&-e^{\frac c2ix}\partial_{xx}\Big[e^{-\frac{i}{2(\sigma+1)}\int_{-\infty}^{x}{\varphi_c(y)^{2\sigma}dy}}(1-\chi_R)\Big(\partial_c\varphi_c+\frac i2 x\varphi_c-\frac {i\sigma}{\sigma+1}\varphi_c\int_{-\infty}^x\partial_c\varphi_c\varphi_c^{2\sigma-1}dy\Big)\Big]\nonumber\\
	&\quad-(1-\chi_R)\Big[i\sigma|\phi_c|^{2\sigma-2}\overline{\phi_c}\partial_x \phi_c\partial_c\phi_c+i\sigma|\phi_c|^{2\sigma-2}\phi_c\partial_x \phi_c \overline {\partial_c\phi_c}+i |\phi_c|^{2\sigma}\partial_x \partial_c\phi_c \Big].
	\end{align}
	We collect the computations \eqref{10.19}--\eqref{10.21} and obtain that every term can be controlled by $\langle x\rangle^{-1-\frac{1}{\sigma}}$ in \eqref{S''x}. Thus, we have
	\begin{align*}
	|S''_c(\phi_c)(1-\chi_R)\partial_c\phi_c\,(x)|\lesssim \langle x\rangle^{-1-\frac{1}{\sigma}}.
	\end{align*}
	Finally, we observe that the support of $S''_c(\phi_c)(1-\chi_R)\partial_c\phi_c$ is included in $[R,+\infty)$.
	
	This concludes the proof of Lemma \ref{lem:S''-Rc}.
\end{proof}


\begin{thebibliography}{99}


%\bibitem{Agueh}
%Agueh, M.,
%Sharp Gagliardo-Nirenberg inequalities and mass
%transport theory, J.\ Dyn.\ Differ.\ Equ., 18, 1069--1093 (2006).


%\bibitem{AmSi}
%Ambrose, D, and Simpson, G., Local existence theory for derivative nonlinear
%Schr\"odinger equations with non-integer power nonlinearities, arXiv:1401.7060.


%\bibitem{BeLi-1983}
%Berestycki, H. and Lions,  P. L., Nonlinear scalar field equations, I, existence of a ground state, Arch.\ Rational Mech.\
%Anal., 82, 313--345 (1983).

%\bibitem{Ba-ASNSP-04}
%Banica, V., Remarks on the blow-up for the Schr\"odinger equation with critical mass on a plane domain,
%Ann. Sc. Norm. Super. Pisa Cl. Sci. (5) 3, no. 1, 139--170 (2004).

\bibitem{BiLi-01-Illposed-DNLS-BO}
Biagioni, H. and  Linares, F., {Ill-posedness for the
derivative Schr\"{o}dinger and generalized Benjamin-Ono equations,}
Trans.\ Amer.\ Math.\ Soc., {353} (9), 3649--3659 (2001).


\bibitem{CoOh-06-DNLS}Colin, M. and Ohta, M., {Stability of solitary waves for derivative nonlinear Schr\"odinger equation},
Ann. I. H. Poincar\'{e}-AN, 23, 753--764 (2006).

\bibitem{CKSTT-01-DNLS}
Colliander, J., Keel, M., Staffilani, G., Takaoka, H. and Tao, T.,
{Global well-posedness result for Schr\"{o}dinger equations with
derivative,} SIAM J.\ Math.\ Anal., { 33} (2), 649--669 (2001).

\bibitem{CKSTT-02-DNLS}
Colliander, J., Keel, M., Staffilani, G.,  Takaoka, H. and Tao, T.,
{A refined global well-posedness result for Schr\"{o}dinger
equations with derivatives,} SIAM J.\ Math.\ Anal., { 34}, 64-86
(2002).

\bibitem{FuHaIn-16-DNLS}
Fukaya, N., Hayashi, M. and Inui, T.,
Global Well-Posedness on a generalized derivative nonlinear Schr\"odinger equation,  arXiv:1610.00267.

\bibitem{Fu-16-DNLS}
Fukaya, N., Instability of solitary waves for a generalized derivative nonlinear
Schr\"odinger equation in a borderline case, arXiv:1604.07945v2.

%\bibitem{GrShSt-87-Stabel}
%Grillakis, M.; Shatah, J.; Strauss, W.,
%{Stability theory of solitary waves in the presence of symmetry, I,} J. Funct. Anal. 74, no. 1, 160--197 (1987).
%
%\bibitem{GrShSt-90-Stabel}
%Grillakis, M.; Shatah, J.; Strauss, W.,
%{Stability theory of solitary waves in the presence of symmetry, II,} J. Funct. Anal. 94, no. 2, 308¨C348 (1990).


%\bibitem{Grhe-95}
%Gr\"{u}nrock, A.; Herr, S.,  Low regularity local well-posedness of the derivative nonlinear Schr\"odinger
%equation with periodic initial data, SIAM J. Math. Anal. 39 (6), 1890--1920 (2008).

\bibitem{JeLiPeSu}
Jenkins, R., Liu, J., Perry, P.  and Sulem. C.,
Global existence for the derivative nonlinear Schr\"odinger equation with arbitrary spectral singularities. arXiv:1804.01506.



\bibitem{GuTa91}
Guo, B.  and Tan, S., {On smooth solution to the initial
value problem for the mixed nonlinear Schr\"odinger equations},
Proc. Roy. Soc. Edinburgh, 119, 31--45 (1991).

\bibitem{guo-DNLS}
Guo, Q.,
Orbital stability of solitary waves for generalized derivative nonlinear Schr\"odinger equations in the endpoint case, arXiv:1705.04458.

\bibitem{GuWu95}
Guo, B. and Wu, Y., Orbital stability of solitary waves for the nonlinear
derivative Schr\"odinger equation, J. Differential Equations, 123,
35--55 (1995).

\bibitem{GHLN}
Guo, Z., Hayashi, N.,  Lin, Y. and Naumkin, P., Modified scattering operator for the derivative nonlinear Schr\"odinger equation, Siam J. Math. Anal., 45 (6), 3854--3871 (2013).

\bibitem{Guo-Ning-Wu-18-gDNLS}
Guo, Z., Ning, C. and Wu, Y.,
{Instability of the solitary wave solutions for the genenalized derivative Nonlinear Schr\"odinger equation in the critical frequency case,} arXiv:1803.07700

\bibitem{Guo-Wu-15-DNLS}
Guo, Z. and Wu, Y.,
Global well-posedness for the derivative nonlinear Schr\"{o}dinger equation in $H^{\frac 12} (\R)$. Discrete Contin. Dyn. Syst. Ser. A, 37, 257-264, (2017).


\bibitem{Hao}
Hao, C., Well-posedness for one-dimensional derivative nonlinear Schr\"odinger equations, Comm.
Pure and Applied Analysis, 6(4), 997--1021 (2007).

%\bibitem{Ha-93-DNLS}
%Hayashi, N., {The initial value problem for the derivative
%nonlinear Schr\"{o}dinger equation in the energy space,} Nonl.
%Anal., {20}, 823--833 (1993).

\bibitem{HaOz-92-DNLS}
Hayashi, N. and Ozawa, T., {On the derivative nonlinear
Schr\"{o}dinger equation,} Physica\ D., {55}, 14--36 (1992).

\bibitem{HaOz-94-DNLS}
Hayashi, N. and Ozawa, T., {Finite energy solution of
nonlinear Schr\"{o}dinger equations of derivative type,} SIAM J.
Math. Anal., {25}, 1488--1503 (1994).

\bibitem{HaOz-16-DNLS}
Hayashi, M. and Ozawa, T., Well-posedness for a generalized derivative nonlinear
Schr\"odinger equation, J. Differential Equations 261, no. 10, 5424--5445 (2016).

%\bibitem{Herr}
%Herr, S., On the Cauchy problem for the derivative nonlinear Schr\"odinger equation with periodic boundary
%condition, Int. Math. Res. Not. 2006, Art. ID 96763, 33 (2006).

\bibitem{Soonsik-W-2014}
Kwon, S. and Wu, Y., Orbital stability of solitary waves for derivative nonlinear Schr\"odinger equation,
preprint.

\bibitem{Stefan-W-15-MultiSoliton-DNLS}
Le Coz, S. and Wu, Y., Stability of multi-solitons for the derivative
nonlinear schr\"odinger equation, Inter. Math. Res. Notices, (2017).


\bibitem{LiPoSa-Local-DNLS}
Linares, L., Ponce, G. and Santos, G. N., On a class of solutions to the generalized
derivative schr\"odinger equations, preprint.


\bibitem{LiSiSu1}
Liu, X., Simpson, G. and Sulem, C., Stability of solitary waves for a generalized derivative
nonlinear Schr\"odinger equation, J. Nonlinear Science, 23(4), 557--583 (2013).

%\bibitem{LiSiSu2}
%Liu, X., Simpson, G., and Sulem, C., Focusing singularity in a derivative nonlinear Schr\"odinger
%equation, Physica D: Nonlinear Phenomena, 262, 48--58 (2013).

\bibitem{Miao-Tang-Xu18}
Miao, C., Tang, X. and Xu, G., Instability of the solitary waves for the generalized derivative nonlinear Schr\"odinger equation in the degenerate case, arXiv:1803.06451v1.

%\bibitem{Miao-Tang-Xu:2017:DNLS}
%Miao, C., Tang, X., Xu, G., Solitary waves for nonlinear Schr\"odinger equation with derivative, preprint arXiv:1702.07856.

\bibitem{Miao-Tang-Xu:2016:DNLS}
Miao, C., Tang, X. and Xu, G., Stability of the traveling waves for the derivative Schr\"odinger
equation in the energy space. Calc. Var. Partial Differential Equations 56,  no. 2, Art. 45, 48 (2017).



\bibitem{Miao-Wu-Xu:2011:DNLS}
Miao, C., Wu, Y. and Xu, G., {Global well-posedness for
Schr\"{o}dinger equation with derivative in $H^{\frac 12} (\R)$,}
J.\ Diff.\ Eq., 251, 2164--2195 (2011).

\bibitem{MOMT-PHY}
Mio, W., Ogino,  T., Minami, K. and  Takeda, S., {Modified
nonlinear Schr\"{o}dinger for Alfv\'{e}n waves propagating along the
magnetic field in cold plasma,} J.\ Phys.\ Soc.\ Japan, {41},
265--271 (1976).

\bibitem{M-PHY}
Mjolhus, E., {On the modulational instability of hydromagnetic
waves parallel to the magnetic field,} J.\ Plasma\ Physc., {16},
321--334 (1976).

%\bibitem{NaOhSt-12}
%Nahmod, A. R.; Oh, T.; Rey-Bellet, L.; and Staffilani,  G., Invariant weighted Wiener measures and almost
%sure global well-posedness for the periodic derivative NLS, J. Eur. Math. Soc. (JEMS) 14 (4), 1275--1330 (2012).

\bibitem{NOW16}
Ning, C., Ohta, M. and Wu, Y.,
{Instability of solitary wave solutions for derivative nonlinear Schr\"odinger equation in endpoint case,} J. Diff. Eq., 262, 1671--1689 (2017).

\bibitem{NOW17}
Ning, C., Ohta, M. and Wu, Y.,
{Instability of solitary wave solutions for derivative nonlinear Schr\"odinger equation in borderline case,} submit.


\bibitem{Ohta-14}
Ohta, M.,
{Instability of solitary waves for nonlinear Schr\"odinger equations of derivative type},
SUT J.\ Math., {50} (2), 399--415 (2014).

\bibitem{Oz-96-DNLS}
Ozawa, T., {On the nonlinear Schr\"{o}dinger equations of
derivative type,} Indiana Univ.\ Math.\ J., {45}, 137--163 (1996).

\bibitem{santos}
Santos, G. N., Existence and uniqueness of solutions for a generalized Nonlinear Derivative
Schr\"odinger equation, J. Diff. Eqs., 259, 2030--2060 (2015).



%\bibitem{SuSu-book} Sulem, C.; and  Sulem, P.-L.,  {The nonlinear Schr\"{o}dinger
%equation,} Appl. Math. Sci., {139}, Springer-Verlag, (1999).

\bibitem{Ta-99-DNLS-LWP}
Takaoka, H., {Well-posedness for the one dimensional
Schr\"{o}dinger equation with the derivative nonlinearity,} Adv.\
Diff.\ Eq., {4 }, 561--680 (1999).

\bibitem{Ta-01-DNLS-GWP}
Takaoka, H., {Global well-posedness for Schr\"{o}dinger
equations with derivative in a nonlinear term and data in low-order
Sobolev spaces,} Electron.\ J.\ Diff.\ Eqns., {42}, 1--23 (2001).

%
%\bibitem{TaXu-17-DNLS-Stability}
%Tang, X., and Xu, G., Stability of the sum of two solitary waves for (gDNLS) in the energy
%space, preprint arXiv:1702.07858.

%\bibitem{ThTz-10-DNLS}
%Thomann, L., and Tzvetkov, N.,
%Gibbs measure for the periodic derivative nonlinear
%Schr\"odinger equation, Nonlinearity, 23, 2771--2791 (2010).

\bibitem{TuFu-80-DNLS}
Tsutsumi, M. and Fukuda, I., On solutions of the derivative nonlinear Schr\"odinger equation. Existence and Uniqueness Theorem. Funkcial. Ekvac., 23, 259-277. (1980).

\bibitem{TuFu-81-DNLS}
Tsutsumi, M. and Fukuda, I., On solutions of the derivative nonlinear Schr\"odinger equation, II. Funkcial. Ekvac., 24, 85--94 (1981).

%\bibitem{W}
%Weinstein, M. I., {Nonlinear {S}chr\"odinger equations and sharp
%interpolation estimates,} Comm. Math. Phys., { 87}(4), 567--576
%(1983).

%\bibitem{Win}
%Win, Y. Y. S.,  Global well-posedness of the derivative nonlinear Schr\"odinger equations on $\T$, Funkcial. Ekvac.
%53 (1), 51--88 (2010).

\bibitem{Wu1}
Wu, Yifei, Global well-posedness of the derivative nonlinear Schr\"odinger equations in energy space, Analysis \& PDE, 6 (8), 1989--2002 (2013).



\bibitem{Wu2}
Wu, Yifei,  {Global well-posedness on the derivative nonlinear Schr\"odinger
equation,} Analysis \& PDE, 8 (5), 1101--1113 (2015).


\end{thebibliography}
\end{document}